\newcommand{\abs}[1]{{\left|#1\right|}}
\def\XXint#1#2#3{{\setbox0=\hbox{$#1{#2#3}{\int}$}
    \vcenter{\hbox{$#2#3$}}\kern-.5\wd0}}
\theoremstyle{definition}
\newtheorem{definizione}{Definition}[section]
\theoremstyle{plain}
\newtheorem{teorema}{Theorem}[section]
\newtheorem{lemma}[teorema]{Lemma}
\newtheorem{prop}[teorema]{Proposition}
\theoremstyle{definition}
\newtheorem{esempio}{Example}[section]
\newtheorem{oss}[esempio]{Remark}
\newtheorem*{open*}{Open problems}
\DeclareMathOperator{\R}{\mathbb{R}}
\newcommand{\myfootnote}[2]{\begingroup
	\def\@makefnmark{}%
	\addtocounter{footnote}{-1}%
	\footnote{\textbf{#1} #2}
	\endgroup}
\definecolor{mygreen}{RGB}{0, 255, 0}
\title{Some shape functionals for the $k$-Hessian equation}
\author{Alba Lia Masiello, Francesco Salerno}
\date{}
\newcommand{\Addresses}{{
\bigskip 
  
   \medskip

    \textit{E-mail address}, A.L.~Masiello: \texttt{masiello@altamatematica.it} 
  
   \medskip 
 \textsc{Holder of a research grant from Istituto Nazionale di Alta Matematica "Francesco Severi" at Dipartimento di Matematica e Applicazioni "R. Caccioppoli", Via Cintia, Complesso Universitario Monte S.
Angelo, 80126 Napoli, Italy.}
   
   \medskip 
     \textit{E-mail address}, F.~Salerno (corresponding author): \texttt{f.salerno@ssmeridionale.it} 
   \medskip

 \textsc{Mathematical and Physical Sciences for Advanced Materials and Technologies, Scuola Superiore Meridionale, Largo San Marcellino 10, 80138 Napoli, Italy.}

 \par\nopagebreak 

}} 
\begin{document}
\maketitle
\begin{abstract}
  For a non-empty, bounded, open, and convex set of class $C^2$, we consider the Torsional Rigidity associated to the $k$-Hessian operator. We first prove P\'olya type lower bound for the $k$-Torsional Rigidity in any dimension; then, in order to investigate optimal sets in the P\'olya type inequality, we provide two quantitative estimates. 
    \newline
    \newline
    \textsc{Keywords:}  $k$-Hessian operator, Torsional rigidity, P\'olya inequality,  web functions.
    \newline
    \textsc{MSC 2020:} 35J96, 52A39, 35J60.  
\end{abstract}

\section{Introduction}
Let $\Omega\subset\mathbb{R}^n$, $n\geq 2$, be a non-empty, bounded, open, and convex set of class $C^2$. Let us consider the $k$-Torsional rigidity, denoted by $T_k(\Omega)$, and the Dirichlet eigenvalue $\lambda_k(\Omega)$ of the $k$-Hessian operator, whose variational characterizations are given by
\begin{equation}\label{varcar}
T_k(\Omega) = \max_{\substack{\varphi \in \mathcal{C}_k }} \frac{\left(\displaystyle{\int_\Omega -\varphi \, dx }\right)^{k+1}}{\displaystyle{\int_\Omega -\varphi S_k(D^2\varphi) \, dx }} \qquad \text{ and }\qquad\lambda_k(\Omega)= \min_{\substack{\varphi\in \mathcal{C}_k}} \frac{\displaystyle{\int_\Omega  -\varphi S_k(D^2\varphi) \, dx }}{\displaystyle{\int_\Omega  (-\varphi)^{k+1} \, dx }},
\end{equation}
where\footnote{The definition of $k$-Convex function can be found in Section \ref{hessian_operators}.}
\begin{equation*}
    \mathcal{C}_k=\{\varphi\in C^2(\Omega)\,:\, \varphi \text{ is $k$-convex}, \, \varphi=0 \text{ on } \partial\Omega\},
\end{equation*}
and $S_k(D^2 \varphi)$ is the \emph{$k$-Hessian operator}, defined as

\begin{equation}\label{kess}
    S_k(D^2 \varphi)= \sum_{1\le i_1< \dots < i_k\le n} \lambda_{i_1}\cdots\lambda_{i_k}, \quad k=1,\dots, n,
\end{equation}
being $\lambda_i$ the eigenvalues of the Hessian matrix of $u$. Let us notice that $S_k(D^2\varphi)$ is a second-order differential operator and it reduces to the Monge-Ampère operator for $k=n$ and to the Laplace operator for $k=1$.
The maximum and minimum in the previous two variational formulas are achieved, respectively, by the solutions to the following problems
\begin{equation}
    \label{torssk}
        \begin{cases}
            S_k(D^2u)= \binom{n}{k}& \text{ in $\Omega$, }\\
            u=0 & \text{ on $\partial \Omega$},
        \end{cases} 
    \end{equation}
\begin{equation}
   \label{autovalsk}
        \begin{cases}
            S_k(D^2u)=\lambda_k(\Omega)(-u)^{k}& \text{in } \Omega,\\
            u=0& \text{on } \partial\Omega.
        \end{cases}
    \end{equation}
The quantity $T_k(\Omega)$ is monotonically increasing with respect to set inclusion, while the quantity $\lambda_k(\Omega)$ is decreasing, and they satisfy the following scaling properties for all $t>0$
\begin{equation*}
     T_k(t\Omega)= t^{k{(n+2)}}T_k(\Omega),\qquad \lambda_k(t\Omega)= t^{-2k}\lambda_k(\Omega).
\end{equation*}
The case $k=1$, where the $k$-Hessian operator reduces to the Laplace operator, is well studied.  In \cite{polya1960} P\'olya gives some estimates on  the torsional rigidity $T(\Omega)=T_1(\Omega)$ of a bounded, open, planar convex set $\Omega$ in terms of geometrical quantities of the set itself.
In particular, he proves that,  in the aforementioned class of sets, the following inequality holds
\begin{equation}
    \label{pol}
    \frac{T(\Omega)P^2(\Omega)}{\abs{\Omega}^3}\ge \frac{1}{3}
\end{equation}
and he also proves that equality is asymptotically achieved by a sequence of thinning rectangles. P\'olya's proof relies on the so-called web-functions technique, which consists of considering test functions that depend on the \emph{distance to the boundary}. For more details about the web function approach, we refer to \cite{CFG_WEB}.
In the same paper and with the same techniques, the author also manages to prove that the first eigenvalue of the Laplacian with Dirichlet boundary conditions $\lambda(\Omega)=\lambda_1(\Omega)$ satisfies

\begin{equation}
    \label{polya:autoval}
    \frac{\lambda(\Omega)\abs{\Omega}^2}{P(\Omega)^2}\le \frac{\pi^2}{4},
\end{equation}
whenever $\Omega$ is a bounded, open, planar convex set and this inequality is asymptotically sharp on a sequence of thinning rectangles. 

As a step further,  Makai in \cite{makai} proves that the two shape functionals in \eqref{pol} and \eqref{polya:autoval} are bounded, providing the following upper bound for the Torsional rigidity
\begin{equation}\label{makai_intro}
   \frac{T(\Omega)P^2(\Omega)}{\abs{\Omega}^3}\le \frac{2}{3}, 
\end{equation}
which is sharp on a sequence of thinning triangles, and the following lower bound for the eigenvalue, which is sharp on the same sequence of sets,
\begin{equation}\label{polyaeigenvalue}
   \frac{\lambda( \Omega) \abs{\Omega}^2}{P^2(\Omega)}\ge  \frac{\pi^2}{16}.
\end{equation}

Over the years, this type of inequality connecting spectral quantities to geometrical ones has gained increasing interest. 
For example, estimates \eqref{pol} and \eqref{makai_intro} are generalized to the Torsional rigidity of the $p-$Laplacian in \cite{fragala_gazzola_lamboley2013}.
In \cite{gavitone_2014} the authors proved that the lower bound \eqref{pol} holds in every dimensions, 
and they also extend this result to the anisotropic case.

 We also recall that in \cite{buttazzo2020convex} the authors prove an upper bound 
\begin{equation*}
    \frac{T(\Omega)P{(\Omega)}^2}{\abs{\Omega}^3}\le \frac{2^{2n} n^{3n}}{\omega_n^2 }\frac{n}{n+2},
\end{equation*}
where $\omega_n$ is the Lebesgue measure of the unit ball. This upper bound is strictly larger than the one given in \eqref{makai_intro} for $n=2$, and that is why it is conjectured not to be optimal.

Moreover, strengthened versions of inequalities \eqref{pol}, \eqref{polya:autoval} can be found in \cite{AGS, grav}.

It is also worth mentioning the \emph{Hersh-Protter inequality} \cite{hersh,Protter}, which, in the same spirit of inequalities \eqref{pol} and \eqref{polya:autoval}, links the eigenvalue of the Laplacian of a convex set $\Omega$ with its \emph{inradius} $r(\Omega)$ (i.e. the radius of the biggest ball contained in $\Omega$), and states
$$\frac{\pi^2}{4}\le \lambda(\Omega)r(\Omega)^2\le \lambda(B).$$

\vspace{2mm}
In the present paper, we aim to establish upper and lower bounds for the functionals 
\begin{equation*}
    \begin{split}
        &(i)\quad \frac{T_k(\Omega)W_k(\Omega)P(\Omega)^k}{\abs{\Omega}^{2k+1}},\\&(ii)\quad \frac{\lambda_k(\Omega)\abs{\Omega}^{k+1}}{P(\Omega)^kW_k(\Omega)},
    \end{split}
\end{equation*}
that are the natural extension of the functional in \eqref{pol} and \eqref{polya:autoval} to the context of the $k$-Hessian operator,
 aiming to continue what was started in \cite{DellaPietra2012UpperBF}.

We start by considering the functional $(i)$, and we prove 

\begin{teorema} \label{teorema:torsione}
    Let $\Omega\subset\R^n$ be an open, bounded convex set of class $C^2$, then
    \begin{equation}\label{lower:torsion}
        \frac{T_k(\Omega)W_k(\Omega)P(\Omega)^k}{\abs{\Omega}^{2k+1}}\geq \frac{k^{k+1}}{n(2k+1)^k}.
    \end{equation}
\end{teorema}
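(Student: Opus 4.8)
The plan is to run P\'olya's web-function argument, now adapted to the $k$-Hessian operator. In the variational characterization \eqref{varcar} of $T_k(\Omega)$ I take the test function $\varphi(x)=-g(d(x))$, where $d(x)=\operatorname{dist}(x,\partial\Omega)$ and $g\colon[0,r(\Omega)]\to[0,+\infty)$ is nondecreasing, concave and of class $C^2$, with $g(0)=0$; since $g\circ d$ is then concave, $\varphi$ is convex, hence $k$-convex, and after a routine smoothing (the distance function is only semiconcave, and the inner parallel sets $\Omega_t=\{d>t\}$ need not be $C^2$ up to the inradius) $\varphi\in\mathcal C_k$. On the set where $d$ is twice differentiable — the complement of a Lebesgue-null set — in an orthonormal frame adapted to the level sets of $d$ the Hessian $D^2\varphi$ has eigenvalue $-g''(d)$ along $\nabla d$ and eigenvalues $g'(d)\,\kappa_i/(1-d\,\kappa_i)$ in the tangential directions, where $\kappa_1,\dots,\kappa_{n-1}$ are the principal curvatures of $\partial\Omega$ at the nearest boundary point; equivalently, writing $\tilde\kappa_i(t)=\kappa_i/(1-t\kappa_i)$ for the principal curvatures of $\partial\Omega_t$,
\begin{equation*}
 S_k(D^2\varphi)=-g''(d)\,g'(d)^{k-1}\,S_{k-1}(\tilde\kappa)+g'(d)^{k}\,S_k(\tilde\kappa),
\end{equation*}
with $S_{k-1},S_k$ the elementary symmetric functions of the $n-1$ tangential curvatures.

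Next I reduce both integrals in \eqref{varcar} to one dimension. By the coarea formula and integration by parts, using $-\tfrac{d}{dt}\abs{\Omega_t}=P(\Omega_t)$ and $g(0)=0$,
\begin{equation*}
 \int_\Omega(-\varphi)\,dx=\int_0^{r(\Omega)} g(t)\,P(\Omega_t)\,dt=\int_0^{r(\Omega)} g'(t)\,\abs{\Omega_t}\,dt .
\end{equation*}
For the denominator, set $\Phi_j(t)=\int_{\partial\Omega_t}S_j(\tilde\kappa(t))\,d\mathcal H^{n-1}$. The Steiner expansion of $P(\Omega_t)$ (and its analogues for the other $\Phi_j$) yields the evolution identities $\Phi_j'(t)=-(j+1)\,\Phi_{j+1}(t)$; in particular $\Phi_{k-1}$ is nonincreasing (because $\Phi_k\ge 0$, $\Omega_t$ being convex) and $\Phi_k=-\tfrac1k\Phi_{k-1}'$. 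Plugging the formula for $S_k(D^2\varphi)$ into the coarea formula and integrating by parts once — here I impose $g'(r(\Omega))=0$ so that the boundary term vanishes — the two occurrences of $g''$ cancel and one is left with
\begin{equation*}
 \int_\Omega(-\varphi)\,S_k(D^2\varphi)\,dx=\frac1k\int_0^{r(\Omega)} g'(t)^{k+1}\,\Phi_{k-1}(t)\,dt\le\frac{n\,W_k(\Omega)}{k}\int_0^{r(\Omega)} g'(t)^{k+1}\,dt ,
\end{equation*}
the last step by $\Phi_{k-1}(t)\le\Phi_{k-1}(0)=\int_{\partial\Omega}S_{k-1}(\kappa)\,d\mathcal H^{n-1}=n\,W_k(\Omega)$ (the definition of $W_k(\Omega)$).

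Combining the two displays, for every admissible $g$,
\begin{equation*}
 T_k(\Omega)\ \ge\ \frac{k}{n\,W_k(\Omega)}\cdot\frac{\bigl(\int_0^{r(\Omega)} g'(t)\,\abs{\Omega_t}\,dt\bigr)^{k+1}}{\int_0^{r(\Omega)} g'(t)^{k+1}\,dt}.
\end{equation*}
Writing $h=g'\ge 0$ and using H\"older's inequality with exponents $k+1$ and $\tfrac{k+1}{k}$, the supremum of the last quotient over admissible $h$ equals $\bigl(\int_0^{r(\Omega)}\abs{\Omega_t}^{(k+1)/k}\,dt\bigr)^{k}$, attained at $h=\abs{\Omega_t}^{1/k}$ — which is nonincreasing and vanishes at $t=r(\Omega)$, hence a legitimate choice of $g'$ (again up to a harmless approximation smoothing out the regularity of $d$ and the behaviour near the inradius). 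Finally, the convexity of $\Omega$ gives the two elementary bounds $\abs{\Omega_t}\ge(\abs{\Omega}-t\,P(\Omega))_+$, since $\abs{\Omega}-\abs{\Omega_t}=\int_0^t P(\Omega_s)\,ds\le t\,P(\Omega)$ as $P(\Omega_s)$ is nonincreasing, and $r(\Omega)\ge\abs{\Omega}/P(\Omega)$, since $\abs{\Omega}=\int_0^{r(\Omega)}P(\Omega_s)\,ds\le r(\Omega)\,P(\Omega)$; therefore
\begin{equation*}
 \int_0^{r(\Omega)}\abs{\Omega_t}^{(k+1)/k}\,dt\ \ge\ \int_0^{\abs{\Omega}/P(\Omega)}\bigl(\abs{\Omega}-t\,P(\Omega)\bigr)^{(k+1)/k}\,dt=\frac{k}{(2k+1)\,P(\Omega)}\,\abs{\Omega}^{(2k+1)/k}.
\end{equation*}
Substituting this into the previous bound yields $T_k(\Omega)\ge\dfrac{k^{k+1}}{n(2k+1)^k}\cdot\dfrac{\abs{\Omega}^{2k+1}}{W_k(\Omega)\,P(\Omega)^k}$, which is exactly \eqref{lower:torsion}.

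I expect the genuine difficulty to be the differential-geometric bookkeeping of the first two steps: justifying the eigenvalue description of $D^2(-g\circ d)$ and, above all, the Steiner-type evolution identities $\Phi_j'=-(j+1)\Phi_{j+1}$ together with the integration by parts that makes the $g''$ terms cancel, in spite of the fact that $d$ is not globally $C^2$ and the inner parallel bodies $\Omega_t$ may develop singularities before reaching the inradius. This forces either a careful almost-everywhere analysis built on the coarea formula or an approximation of $\Omega$ by smoother convex sets (which is also the natural place to verify the $k$-convexity of $\varphi$ and the vanishing of the boundary term). Once the identity for $\int_\Omega(-\varphi)S_k(D^2\varphi)\,dx$ is secured, the optimization in $g$ and the convex-geometry estimates are entirely routine.
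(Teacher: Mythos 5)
Your proposal is correct in substance and lands on exactly the constant of \eqref{lower:torsion}; it is the same web-function strategy as the paper, but the core reduction is implemented differently. The paper never touches the eigenvalue structure of $D^2(g\circ d)$: it combines the rescaling trick of Lemma \ref{lemma1} with the divergence form \eqref{skdiv}, the coarea formula and the identity \eqref{hksk} (Lemma \ref{lemma2}), applies everything to the smooth concave approximations $d_\varepsilon$ of Proposition \ref{approx}, and then only uses the monotonicity of quermassintegrals under inclusion, $W_k(\{d_\varepsilon>t\})\le W_k(\Omega)$, to reach \eqref{step1}; the final lower bound for $\int\mu^{1+\frac1k}$ is obtained by an integration by parts exploiting $\mu_\varepsilon''\ge 0$. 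You instead diagonalize $D^2(-g\circ d)$, invoke the evolution identities $\Phi_j'=-(j+1)\Phi_{j+1}$ for the curvature integrals of the inner parallel sets, integrate by parts in $t$ to cancel the $g''$ terms, optimize over $g$ by H\"older (equivalent to the paper's pointwise maximization in Lemma \ref{lemma2}), and close with the elementary bound $\abs{\Omega_t}\ge(\abs{\Omega}-tP(\Omega))_+$, which yields the same constant as the paper's convexity argument. Your identification $\int_{\partial\Omega}S_{k-1}(\kappa)\,d\mathcal H^{n-1}=nW_k(\Omega)$ is the same normalization step the paper performs when passing from \eqref{hksk} to $nW_k$ via \eqref{quermass_con_curvature}, so there is no discrepancy with the paper's constant, but the factor $\binom{n-1}{k-1}$ distinguishing $S_{k-1}(\kappa)$ from $H_{k-1}$ is worth tracking explicitly.

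The one place where your route is genuinely riskier than the paper's is the step you yourself flag: the identity $\Phi_j'=-(j+1)\Phi_{j+1}$ is exact only as long as the inner parallel sets remain smooth, and for a general convex body of class $C^2$ they develop singularities strictly before the inradius; moreover, approximating $\Omega$ by smoother convex sets (one of your two suggested fixes) does not remove this obstruction, since inner parallel bodies of smooth strictly convex sets still hit the cut locus. Two repairs are available. Either observe that you only need the inequality $\Phi_{k-1}'\le-k\Phi_k$ in a weak (a.e./BV) sense, which does hold for inner parallel sets and has the favorable sign, since you are bounding the denominator $\int_\Omega(-\varphi)S_k(D^2\varphi)\,dx$ from above and $g\,g'^k\ge 0$; or follow the paper and smooth the distance function itself (Proposition \ref{approx}), so that \eqref{skdiv}, the coarea formula and \eqref{hksk} are applied to a genuinely $C^2$ concave function, and the only geometric input is the inclusion monotonicity of $W_k$ — no $t$-derivative of curvature integrals is ever needed. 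Once this step is secured (and the test function is smoothed so as to lie in $\mathcal C_k$ for the characterization \eqref{varcar}), the rest of your argument is complete and correct.
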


In contrast to the Laplacian case, establishing whether inequality \eqref{lower:torsion} is sharp in the context of the $k$-Hessian operator is more challenging. Intuitively, one might expect that an optimal sequence is a sequence of thinning cylinders, and support for this intuition is provided by the following Theorems.

\begin{teorema}\label{lower:ags}
    Let $\Omega\subset\R^n$ be an open, bounded convex set of class $C^2$. Then
    \begin{equation*}
        \frac{T_k(\Omega)W_k(\Omega)P(\Omega)^k}{\abs{\Omega}^{2k+1}}-\frac{k^{k+1}}{n(2k+1)^k}\geq c_1(n,k)\alpha(\Omega),
    \end{equation*}
    where
    \begin{equation*}
        \alpha(\Omega)=\frac{w_\Omega}{D(\Omega)}.
    \end{equation*}
    Moreover, the constant is given by
    \begin{equation*}
        c_1(n,k)=\frac{k(2k+1)^{2+\frac{1}{k}}}{(3k+1)^{3+\frac{1}{k}}}\frac{k^{k+2}2(n-1)}{n^3(2k+1)^k}a(n)
    \end{equation*}
    and $a(n)$ is the constant in Proposition \ref{BonnesenFenchel}. 
\end{teorema}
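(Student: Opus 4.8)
The plan is to re-run the argument behind Theorem \ref{teorema:torsione}, keeping every inequality that is \emph{not} an equality on cylinders in a quantitative form, and to show that the total loss is bounded below by $c_1(n,k)\alpha(\Omega)$. Recall that Theorem \ref{teorema:torsione} is obtained by testing the variational characterization \eqref{varcar} with the web function $\varphi=-g\bigl(\rho(\cdot)\bigr)$, where $\rho(x)=\mathrm{dist}(x,\partial\Omega)$ and $g$ is the explicit one-dimensional profile with $g'(t)=\bigl(\abs{\Omega}-tP(\Omega)\bigr)_+^{1/k}$: by the concavity of $\rho$ on the convex set $\Omega$ and the monotonicity of $g'$, the eigenvalues of $D^2\varphi$ are $-g''\ge 0$ in the direction $\nabla\rho$ and $g'\mu_i\ge 0$ along the level sets $\partial\Omega_t$ (with $\mu_i$ their principal curvatures), so that $\varphi$ is convex, hence $k$-convex, and vanishes on $\partial\Omega$ — a standard smoothing taking care of the corner of $g'$ at $t=r^\ast:=\abs{\Omega}/P(\Omega)$ and of the cut locus of $\rho$. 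With this choice, the coarea formula and the divergence structure of $S_k$ yield, writing $\Omega_t=\{\rho>t\}$ and $r=r(\Omega)$,
\begin{equation*}
\int_\Omega(-\varphi)\,dx=\int_0^{r}g'(t)\,\abs{\Omega_t}\,dt,\qquad \int_\Omega(-\varphi)\,S_k(D^2\varphi)\,dx=\frac{n}{k}\int_0^{r}g'(t)^{k+1}\,W_k(\Omega_t)\,dt,
\end{equation*}
and the constant $\tfrac{k^{k+1}}{n(2k+1)^k}$ drops out after using $W_k(\Omega_t)\le W_k(\Omega)$ in the denominator (which pushes the estimate in our favour) and $\abs{\Omega_t}\ge\bigl(\abs{\Omega}-tP(\Omega)\bigr)_+$ in the numerator. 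The point is that the deficit of the latter,
\begin{equation*}
\delta(t):=\abs{\Omega_t}-\bigl(\abs{\Omega}-tP(\Omega)\bigr)_+=\int_0^{t}\bigl(P(\Omega)-P(\Omega_s)\bigr)\,ds\ \ge 0,
\end{equation*}
vanishes identically exactly when $\Omega$ is a cylinder — precisely where $\alpha(\Omega)=w_\Omega/D(\Omega)$ degenerates.

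Carrying this deficit through the computation, the numerator of the tested quotient equals $N_0+\Delta$, where $N_0=\tfrac{k}{(2k+1)P(\Omega)}\abs{\Omega}^{(2k+1)/k}$ is the cylinder value and $\Delta=\int_0^{r^\ast}\bigl(\abs{\Omega}-tP(\Omega)\bigr)^{1/k}\delta(t)\,dt$. Using $(N_0+\Delta)^{k+1}\ge N_0^{k+1}+(k+1)N_0^{k}\Delta$ together with $W_k(\Omega_t)\le W_k(\Omega)$ one obtains, after simplification,
\begin{equation*}
\frac{T_k(\Omega)W_k(\Omega)P(\Omega)^k}{\abs{\Omega}^{2k+1}}-\frac{k^{k+1}}{n(2k+1)^k}\ \ge\ (k+1)\,\frac{k^{k}}{(2k+1)^{k-1}}\,\frac{P(\Omega)}{n\,\abs{\Omega}^{(2k+1)/k}}\,\Delta .
\end{equation*}
It then remains to bound $\Delta$, hence $\delta$, from below, and this is where Proposition \ref{BonnesenFenchel} enters: it provides a Bonnesen–Fenchel–type lower estimate for the perimeter drop $P(\Omega)-P(\Omega_s)$ (equivalently, for the mean-curvature integral of $\partial\Omega_s$) in terms of $w_\Omega$ and $D(\Omega)$, with dimensional constant $a(n)$ and a factor $2(n-1)$ reflecting the $n-1$ principal curvatures. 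Substituting this, integrating twice (once in $s$ to recover $\delta$, once in $t$ against the weight $\bigl(\abs{\Omega}-tP(\Omega)\bigr)^{1/k}$ on $[0,r^\ast]$ — Beta-type integrals from which the factors $(2k+1)^{2+1/k}$ and $(3k+1)^{3+1/k}$ emerge), and finally using the elementary convex-geometry bound $\abs{\Omega}\ge\tfrac{r(\Omega)}{n}P(\Omega)\ge\tfrac{w_\Omega}{2n}P(\Omega)$ to convert the resulting dimensional quantity into $\alpha(\Omega)$, one reaches the stated inequality with the claimed $c_1(n,k)$.

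I expect two routine difficulties and one substantial one. The routine ones are: verifying admissibility of the (non-smooth, compactly supported) test function, which — as sketched — follows from convexity of $\varphi$ after smoothing; and the linearization, where the correction $\Delta$ sits inside the power $(\cdot)^{k+1}$, so the cross term $(k+1)N_0^{k}\Delta$ must be extracted and every remaining inequality kept sharp on cylinders (this is what pins down the precise exponents in $c_1(n,k)$, and any slack there only strengthens the conclusion). The genuine obstacle is the convex-geometry input, Proposition \ref{BonnesenFenchel}: one must show that the perimeter of the inner parallel bodies of a $C^2$ convex set is forced to decrease at a rate controlled from below by $w_\Omega/D(\Omega)$. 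It is there, rather than in the PDE side, that the dimensional constant $a(n)$ — hence the shape of $c_1(n,k)$ — is determined, while everything else amounts to bookkeeping through the coarea identities above.
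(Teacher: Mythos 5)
Your overall strategy (web functions built on the distance function, isolating the deficit between $\mu(t)$ and the affine profile $(\abs{\Omega}+P(\Omega)t)_+$, then linearizing) is close in spirit to the paper's, but the step you yourself single out as the crux is both unproved and misattributed, and this is a genuine gap. Proposition \ref{BonnesenFenchel} contains no information about inner parallel bodies: it only records the static inequalities $P(\Omega)\le n\omega_n\left(D(\Omega)/2\right)^{n-1}$ and $r(\Omega)\ge a(n)\,w_\Omega$, and in the paper it is invoked only at the very end, to convert the quantity $\abs{\Omega}/P(\Omega)^{\frac{n}{n-1}}$ (via $\abs{\Omega}/P(\Omega)\ge r(\Omega)/n$) into $\alpha(\Omega)=w_\Omega/D(\Omega)$. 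The actual quantitative input that generates the remainder term is Lemma \ref{AmatoTonto} (Lemma 2.4 of \cite{AGS}), namely $P(t)\le P(\Omega)-c_n\,\frac{\abs{\Omega}-\mu(t)}{P(\Omega)^{1/(n-1)}}$: the perimeter of the inner parallel sets drops at a rate controlled by the \emph{volume deficit}, not directly by $w_\Omega/D(\Omega)$. Indeed the bound you assert --- a lower estimate for $P(\Omega)-P(\Omega_s)$ purely in terms of $w_\Omega$ and $D(\Omega)$, with a factor $2(n-1)$ --- cannot hold pointwise in $s$, because $P(\Omega)-P(\Omega_s)\to 0$ as $s\to 0^+$ while $w_\Omega/D(\Omega)$ is a fixed positive number; a factor measuring how far $\Omega_s$ is from $\Omega$, as in Lemma \ref{AmatoTonto}, is indispensable. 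Without that lemma (or an equivalent substitute, which you neither state nor prove), your deficit $\Delta$ admits no lower bound and the argument stops exactly where the theorem begins.

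A secondary point: even granting such a perimeter-decay estimate, your roadmap would not reproduce the stated constant. In the paper the exponents $(2k+1)^{2+\frac{1}{k}}$ and $(3k+1)^{3+\frac{1}{k}}$ do not arise from Beta-type integrals against the weight $\left(\abs{\Omega}-tP(\Omega)\right)^{1/k}$; they come from splitting $\int_{-r(\Omega)}^0\mu(t)^{1+\frac1k}\,dt$ at the level $\overline t$ where $\mu(\overline t)=\tilde c\abs{\Omega}$, applying Lemma \ref{AmatoTonto} only on the deep part $t\le\overline t$ (where $\abs{\Omega}-\mu(t)\ge(1-\tilde c)\abs{\Omega}$), using $1/(1-a)\ge 1+a$ and $(a+b)^k\ge a^k+ka^{k-1}b$, and finally optimizing $\tilde c^{\,2+\frac1k}(1-\tilde c)$ at $\tilde c=\frac{2k+1}{3k+1}$. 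Two further imprecisions: your claim that $\delta\equiv 0$ exactly for cylinders is not correct (for any bounded convex body the inner parallel sets have strictly smaller perimeter, so $\delta>0$; cylinders only make the deficit degenerate asymptotically), and the identity you write for $\int_\Omega(-\varphi)S_k(D^2\varphi)\,dx$ holds as an inequality after the smoothing of Proposition \ref{approx}, which is how the paper handles admissibility through Lemmas \ref{lemma1} and \ref{lemma2}. These latter points are fixable bookkeeping; the missing perimeter-decay lemma is the essential hole.
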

This first Theorem ensures that an optimal set for \eqref{lower:torsion} cannot exist: it should have $0$ mean width, and this is absurd. On the other hand, it tells us that an optimal sequence must be a sequence of thinning domains. The shape of this thinning sequence is described by the following result.
\begin{teorema}\label{teorema:ags}
    Let $\Omega\subset\R^n$ be an open, bounded convex set of class $C^2$. Then
    \begin{equation*}
        \frac{T_k(\Omega)W_k(\Omega)P(\Omega)^k}{\abs{\Omega}^{2k+1}}-\frac{k^{k+1}}{n(2k+1)^k}\geq c_2(n,k){\beta({\Omega})}^\frac{2k+1}{k},
    \end{equation*}
    where
    \begin{equation*}
        \beta(\Omega)=\frac{P(\Omega)r(\Omega)}{\abs{\Omega}}-1,
    \end{equation*}
    and the constant is given by
    \begin{equation*}
        c_2(n,k)=\frac{k^{k+2}}{n(2k+1)^k(6n)^\frac{2k+1}{k}}.
    \end{equation*}
\end{teorema}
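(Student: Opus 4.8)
The plan is to prove the estimate as a \emph{quantitative refinement} of Theorem \ref{teorema:torsione}, i.e.\ by keeping in that proof the terms that are usually discarded and bounding them below by a power of $\beta(\Omega)$. Set $r=r(\Omega)$, $d(x)=\operatorname{dist}(x,\partial\Omega)$, $\Omega_t=\{d>t\}$, $\mu(t)=\abs{\Omega_t}$ and $p(t)=\mathcal H^{n-1}(\{d=t\})=P(\Omega_t)$; then $\mu'=-p$, $\mu(0)=\abs{\Omega}$, $\mu(r)=0$, $p(0)=P(\Omega)$, $p$ is non-increasing (so $\mu$ is convex and, by convexity of $\Omega$, $\Omega_t$ is a convex body with inradius exactly $r-t$), and $\int_0^r p\,dt=\abs{\Omega}$, so that
\[
\beta(\Omega)=\frac{r(\Omega)P(\Omega)}{\abs{\Omega}}-1=\frac{1}{\abs{\Omega}}\int_0^r\bigl(P(\Omega)-p(t)\bigr)\,dt\ge 0,\qquad r-\rho=\rho\,\beta(\Omega),\ \ \rho:=\frac{\abs{\Omega}}{P(\Omega)}.
\]
First I would recall the one‑dimensional reduction behind Theorem \ref{teorema:torsione}: inserting a web function $u=-\psi(d)$ (with $\psi$ increasing, $\psi(0)=0$) into \eqref{varcar}, integrating by parts and using the co‑area formula gives $\int_\Omega(-u)\,dx=\int_0^r\psi(t)p(t)\,dt=\int_0^r\psi'(t)\mu(t)\,dt$ and $\int_\Omega(-u)S_k(D^2u)\,dx=\tfrac1k\int_0^r\psi'(t)^{k+1}\,w(t)\,dt$, where $w(t)$ is (in the normalisation defining $W_k$) the $k$‑th quermassintegral of $\Omega_t$, with $w(0)=nW_k(\Omega)$ after the dimensional constant from co‑area is absorbed; optimising the resulting quotient in $\psi$ is a one‑variable Hölder problem and yields
\[
\frac{T_k(\Omega)W_k(\Omega)P(\Omega)^k}{\abs{\Omega}^{2k+1}}\ \ge\ \frac{k\,W_k(\Omega)\,P(\Omega)^k}{n\,\abs{\Omega}^{2k+1}}\left(\int_0^r\frac{\mu(t)^{(k+1)/k}}{W_k(\Omega_t)^{1/k}}\,dt\right)^{k}.
\]
Using the crude bounds $W_k(\Omega_t)\le W_k(\Omega)$ and $\mu(t)\ge(\abs{\Omega}-P(\Omega)t)_+$ (convexity of $\mu$ with $\mu'(0)=-P(\Omega)$, together with $\int_0^\rho(\abs{\Omega}-P(\Omega)t)^{(k+1)/k}\,dt=\tfrac{k}{2k+1}\tfrac{\abs{\Omega}^{(2k+1)/k}}{P(\Omega)}$) reproduces exactly the constant $k^{k+1}/(n(2k+1)^k)$, and both become equalities only on the degenerate profile $\mu(t)=\abs{\Omega}-P(\Omega)t$, i.e.\ when $r=\rho$, i.e.\ $\beta(\Omega)=0$.

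The heart of the proof is to show that $\beta(\Omega)>0$ forces a quantitative gain in these two bounds. I would exploit, on the one hand, that $\Omega_t$ has inradius $r-t$, hence $\mu(t)\ge\omega_n(r-t)^n$ (and more sharply $\mu(t)\ge\abs{\Omega}(1-t/r)^n$ from the concavity of $\mu^{1/n}$), which is strictly positive on the interval $[\rho,r]$ on which $(\abs{\Omega}-P(\Omega)t)_+$ vanishes; and, on the other hand, that the perimeter deficit $P(\Omega)-p(t)\ge0$ is non‑decreasing with total mass $\int_0^r(P(\Omega)-p(t))\,dt=\abs{\Omega}\beta(\Omega)$, so it cannot be negligible once $\beta(\Omega)>0$ — this is what is lost in $W_k(\Omega_t)\le W_k(\Omega)$ (for $k=1$, $W_1(\Omega_t)=p(t)/n$ directly; for general $k$ through monotonicity of quermassintegrals of parallel bodies, balanced against the fact that $\mu$ itself cannot collapse on $[\rho,r]$). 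Feeding the improved profile back into the one‑dimensional integral, splitting off the main term by $(a+b)^k\ge a^k+k\,a^{k-1}b$, and rewriting the surplus in terms of $r-\rho=\rho\,\beta(\Omega)$ with $\rho=\abs{\Omega}/P(\Omega)$, one arrives at a scale‑invariant lower bound of the shape $\operatorname{const}(n,k)\,\beta(\Omega)^{(2k+1)/k}$.

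To land on the precise constant $c_2(n,k)$ one still has to dominate a handful of scale‑correct but $\Omega$‑dependent leftover factors by the isoperimetric inequality $P(\Omega)\ge n\omega_n^{1/n}\abs{\Omega}^{(n-1)/n}$ and the convex‑body bound $\beta(\Omega)\le n-1$ (which follows from $\abs{\Omega}\ge r(\Omega)P(\Omega)/n$), together with crude numerical estimates on the one‑variable integrals. These are routine and would be done at the end; the factor $(6n)^{(2k+1)/k}$ in the denominator of $c_2$ is exactly the price paid here.

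The step I expect to be the main obstacle is the middle one: turning the geometric non‑degeneracy of the parallel sets $\Omega_t$ into a lower bound, \emph{uniform over all convex bodies}, that is strong enough to produce the exponent $(2k+1)/k$ rather than a larger one (which would give a weaker estimate for small $\beta$). The naive inscribed‑ball bound $\mu(t)\gtrsim(r-t)^n$ and the crude bound $W_k(\Omega_t)\le W_k(\Omega)$, used in isolation, are not enough; one must retain the weight $W_k(\Omega_t)$ in the denominator and couple the perimeter deficit with the convexity of $\mu$ and the concavity of $\mu^{1/n}$ in a dimensionally explicit way, handling simultaneously the ``thin'' regime $\beta(\Omega)\to0$ and the genuinely $n$‑dimensional ones, where $\mu$ decays like $(r-t)^n$ near $t=r$.
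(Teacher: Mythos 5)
Your overall frame is the right one, and it matches the paper's up to the point where the real work starts: reduce to the one--dimensional inequality \eqref{step1} (web functions, $W_k(\Omega_t)\le W_k(\Omega)$), split the integral at $t=-\abs{\Omega}/P(\Omega)$, use $\mu(t)\ge\abs{\Omega}+P(\Omega)t$ on the outer piece to recover the main constant $k^{k+1}/(n(2k+1)^k)$, and extract the remainder from the inner piece via $(a+b)^k\ge a^k+ka^{k-1}b$. But the decisive quantitative ingredient is missing. In the paper the inner piece is bounded below by $\frac{k}{2k+1}\frac{1}{P(\Omega)(nW_k(\Omega))^{1/k}}\,\mu\!\left(-\tfrac{\abs{\Omega}}{P(\Omega)}\right)^{2+\frac1k}$, and the whole theorem then rests on the single estimate $\mu\!\left(-\tfrac{\abs{\Omega}}{P(\Omega)}\right)\ge\tfrac{\abs{\Omega}}{6n}\,\beta(\Omega)$, i.e.\ a lower bound \emph{linear} in $\beta(\Omega)$ (Lemma \ref{LemmaAGS}, taken from [AGS, Lemma 3.1]); raising it to the power $2+\tfrac1k$ is exactly what produces the exponent $\tfrac{2k+1}{k}$ and the factor $(6n)^{\frac{2k+1}{k}}$ in $c_2(n,k)$ (not, as you suggest, isoperimetry plus $\beta\le n-1$ plus numerical estimates at the end).

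You do not supply this lemma, and the substitutes you propose cannot replace it. The inscribed-ball bound $\mu(t)\ge\omega_n(r-t)^n$, or the scaling/concavity bound $\mu(t)\ge\abs{\Omega}(1-t/r)^n$, evaluated at $t=\rho=\abs{\Omega}/P(\Omega)$ give only $\mu(\rho)\gtrsim\abs{\Omega}\bigl(\beta/(1+\beta)\bigr)^n$, which for small $\beta$ is of order $\beta^n$; fed into the remainder this yields an exponent $n\,\tfrac{2k+1}{k}$, strictly worse than the claimed $\tfrac{2k+1}{k}$ for every $n\ge2$. You correctly identify this as ``the main obstacle'' but leave it unresolved, offering only the heuristic of ``coupling the perimeter deficit with the convexity of $\mu$''; that is precisely the content of the missing lemma, and without it (or an equivalent linear-in-$\beta$ bound on the residual volume at depth $\abs{\Omega}/P(\Omega)$) the proof does not close. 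Note also that your suggestion that one must keep the weight $W_k(\Omega_t)$ in the denominator is a red herring: the crude bound $W_k(\Omega_t)\le W_k(\Omega)$ is exactly what the paper uses, and it suffices once the linear volume bound is in hand.
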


Theorem \ref{teorema:ags} gives us the following information: an optimal sequence must be optimal also for

$$\frac{P(\Omega)r(\Omega)}{\abs{\Omega}}-1\ge 0,$$
hence, for instance, it should be a sequence of thinning cylinders. Anyway, we are not able to estimate the torsional rigidity of thinning cylinders from above in order to confirm this intuition.

As far as the upper bound, we show in Example \ref{esempio} that shows that the functional $(i)$ is unbounded, at least for some values of $k$ and $n$.

For the functional $(ii)$, we prove the following upper bound.
\begin{teorema}\label{TeoAut}
 Let $\Omega\subset\R^n$ be an open, bounded, convex set of class $C^2$. Then,
\begin{equation*}
    \frac{\lambda_k(\Omega)\abs{\Omega}^{k+1}}{P(\Omega)^kW_k(\Omega)}\leq \binom{n}{k}\frac{n(2k+1)^k}{k^{k+1}}.
\end{equation*}

\end{teorema}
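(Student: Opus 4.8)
The plan is to mirror the strategy behind the P\'olya-type lower bound of Theorem \ref{teorema:torsione}, but now for the eigenvalue functional, exploiting the min-characterization of $\lambda_k(\Omega)$ in \eqref{varcar}. Since $\lambda_k(\Omega)$ is defined as a minimum over $\mathcal{C}_k$, any admissible test function $\varphi$ gives an upper bound on $\lambda_k(\Omega)$ via the Rayleigh-type quotient $\int_\Omega -\varphi S_k(D^2\varphi)\,dx / \int_\Omega(-\varphi)^{k+1}\,dx$. The natural choice, in the spirit of web functions, is to take $\varphi$ depending only on the distance to the boundary: set $\varphi(x) = g(d(x))$ where $d(x) = \operatorname{dist}(x,\partial\Omega)$ and $g$ is a one-variable profile with $g(0)=0$. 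First I would recall (or cite from the earlier sections of the paper, e.g.\ where $W_k(\Omega)$ is introduced) the coarea-type formula expressing integrals of functions of $d$ over $\Omega$ in terms of $P(\{d>t\})$, together with the key fact that on the level sets of the distance function the $k$-Hessian of $g(d)$ has a manageable form (the eigenvalues of $D^2(g(d))$ split into the radial direction, giving $g''(d)$, and the $n-1$ tangential directions, governed by $g'(d)$ times the principal curvatures of the parallel set). Convexity of $\Omega$ guarantees $d$ is concave, hence $g(d)$ is $k$-convex for a suitable monotone concave-type profile $g$, so $\varphi \in \mathcal{C}_k$.

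The computation then proceeds by plugging $\varphi = g(d)$ into both integrals. Using the coarea formula, $\int_\Omega (-\varphi)^{k+1}\,dx = \int_0^{r(\Omega)} (-g(t))^{k+1} P(\Omega_t)\,dt$ and similarly the numerator reduces to a one-dimensional integral against $P(\Omega_t)$ (here $\Omega_t = \{x \in \Omega : d(x) > t\}$, $r(\Omega)$ the inradius). The quantities $W_k(\Omega)$, $P(\Omega)$ and $|\Omega|$ enter precisely through the integrals $\int_0^{r} P(\Omega_t)\,t^j\,dt$ or through the expansion $P(\Omega_t) \le P(\Omega)$ combined with $|\Omega| = \int_0^r P(\Omega_t)\,dt$; the weight $W_k$ is the one tailored to make the $k$-Hessian numerator collapse. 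I would then optimize over the profile $g$: the resulting one-dimensional variational problem should be solved by an explicit power function $g(t) = -(r-t)^{\gamma}$ or, after the change of variables adapted to the web setting, by a linear profile, and the extremal value will produce exactly the constant $\binom{n}{k}\frac{n(2k+1)^k}{k^{k+1}}$ — note this is precisely $\binom{n}{k}$ divided by the constant $\frac{k^{k+1}}{n(2k+1)^k}$ appearing in Theorem \ref{teorema:torsione}, which strongly suggests the same extremal web profile is used in both proofs and that the two bounds are "dual" to each other through the Cauchy–Schwarz/H\"older step.

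The main obstacle I anticipate is controlling the tangential part of $S_k(D^2(g(d)))$: the principal curvatures of the parallel sets $\partial\Omega_t$ are not constant, so one does not get a clean identity but only an inequality, and one must check that the inequality goes in the favorable direction for an \emph{upper} bound on $\lambda_k$ (i.e.\ that discarding curvature terms, or bounding them via convexity, only increases the Rayleigh quotient or is absorbed correctly). A clean way around this is to use the divergence-structure / integration-by-parts identity for the $k$-Hessian — namely $\int_\Omega -\varphi\, S_k(D^2\varphi)\,dx = \frac{1}{k+1}\int_\Omega (-\varphi) S_k(D^2 \varphi)\,dx$ type manipulations together with the known expression $\int_\Omega S_k(D^2(g(d)))(-g(d))\,dx$ reduced via the mixed-volume interpretation of $W_k$ — so that the curvature contributions are packaged exactly into $W_k(\Omega)$ and $P(\Omega)$ rather than estimated term by term. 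The remaining work is then the purely one-dimensional optimization, which is routine, plus checking the admissibility ($k$-convexity and boundary values) of the optimal profile, which follows from concavity of $d$ on the convex set $\Omega$.
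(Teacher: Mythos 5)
Your plan does not match the paper's proof, and as written it has a genuine gap. The paper does not prove Theorem \ref{TeoAut} by a web-function computation for the Rayleigh quotient at all: it first shows (Theorem \ref{PolyaFunctional}) that $\lambda_k(\Omega)T_k(\Omega)\leq \binom{n}{k}\abs{\Omega}^{k}$, by testing the minimum characterization of $\lambda_k$ in \eqref{varcar} with the $k$-torsion function $u_\Omega$ of \eqref{torssk} and applying H\"older's inequality $\norma{u_\Omega}_{L^1}\leq\norma{u_\Omega}_{L^{k+1}}\abs{\Omega}^{k/(k+1)}$, and then simply multiplies this by the already-proved torsion lower bound \eqref{lower:torsion}. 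This composition is exactly what explains the numerology you noticed (the constant is $\binom{n}{k}$ divided by the torsion constant); the ``duality'' is through the torsion function and H\"older, not through a common extremal web profile.

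The gap in your route is the step you defer to a ``routine'' one-dimensional optimization. Testing with $\varphi=g(d)$ (after the $d_\varepsilon$ regularization of Proposition \ref{approx}) does let you bound the numerator by $\tfrac{1}{k}\,nW_k(\Omega)\int g'(t)^{k+1}\,dt$ via \eqref{hksk}, \eqref{quermass_con_curvature} and monotonicity of quermassintegrals, but for an \emph{upper} bound on $\lambda_k$ you then need a \emph{lower} bound on the denominator $\int_0^{r(\Omega)}G(t)^{k+1}P(\Omega^t)\,dt$ of the form $c(n,k)\,\abs{\Omega}^{k+1}/P(\Omega)^{k}$, uniformly over convex sets and for an admissible (concave, increasing, vanishing at $0$) profile $G$; you never address this, and the perimeter of the inner parallel sets can decay, so no clean identity packages it into $P(\Omega)$ as it does for the torsion. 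Indeed, the paper states explicitly that the web-function technique fails for the eigenvalue functional, and the known outcome of the distance-function test is the \emph{different} functional in \eqref{funz:dpg}, involving $W_{k-1}$ and the constant $n(k+2)/(n-k+1)$, not the bound you are asked to prove. The assertion that optimizing over $g$ ``will produce exactly'' the constant $\binom{n}{k}n(2k+1)^k/k^{k+1}$ is therefore unsubstantiated, and there is no evidence it can be made to work for all $k$ and $n$. The fix is to make your duality observation precise the way the paper does: prove $\mathcal{G}_k(\Omega)\leq\binom{n}{k}$ with the torsion function as test function, then invoke Theorem \ref{teorema:torsione}.
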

Finally, we show an example in which this functional goes to zero.

The paper is organized as follows: in Section \ref{Sec2} we recall some preliminary tools about convex geometry and the $k$-Hessian operator. In Section \ref{Sec3} we prove Theorems \ref{teorema:torsione}-\ref{lower:ags}-\ref{teorema:ags} on the torsion functional $(i)$. We conclude with Section \ref{Sec4} where Theorem \ref{TeoAut} is proved.

\section{Preliminaries}\label{Sec2}
For the content of this section, we will refer to \cite{Schneider2014}.
Throughout this article, $|\cdot|$ will denote the Euclidean norm in $\mathbb{R}^n$,
 while $\cdot$ is the standard Euclidean scalar product for  $n\geq2$. By $\mathcal{H}^d(\cdot)$, for $d\in [0,n)$, we denote the $d$-dimensional Hausdorff measure in $\mathbb{R}^n$.
 
 The perimeter of $\Omega$ in $\mathbb{R}^n$ will be denoted by $P(\Omega)$ and, if $P(\Omega)<\infty$, we say that $\Omega$ is a set of finite perimeter. In our case, $\Omega$ is a bounded, open, and convex set; this ensures us that $\Omega$ is a set of finite perimeter and that $P(\Omega)=\mathcal{H}^{n-1}(\partial\Omega)$. Moreover, if $\Omega$ is an open set with Lipschitz boundary, it holds

 \begin{teorema}[Coarea formula]
 Let $\Omega \subset \mathbb{R}^n$ be an open set. Let $f\in W^{1,1}_{\text{loc}}(\Omega)$ and let $u:\Omega\to\R$ be a measurable function. Then,
 \begin{equation}
   \label{coarea}
   {\displaystyle \int _{\Omega}u(x)|\nabla f(x)|dx=\int _{\mathbb {R} }dt\int_{\Omega\cap f^{-1}(t)}u(y)\, d\mathcal {H}^{n-1}(y)}.
 \end{equation}
 \end{teorema}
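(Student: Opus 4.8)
The plan is to establish the identity first in the model case $u=\chi_B$, $B\subseteq\Omega$ Borel, and then recover the general statement by the standard measure-theoretic machinery: linearity in $u$ passes to simple functions, monotone convergence passes to nonnegative Borel $u$, and the splitting $u=u^+-u^-$ (applied to a Borel representative of the given measurable $u$, which changes neither side) passes to general $u$ once one side is finite. Thus it suffices to prove, for every Borel $B\subseteq\Omega$,
\begin{equation*}
  \int_B|\nabla f|\,dx=\int_{\mathbb{R}}\mathcal{H}^{n-1}\big(B\cap f^{-1}(t)\big)\,dt.\tag{$\star$}
\end{equation*}
Both sides of $(\star)$ define $\sigma$-finite Borel measures on $\Omega$ (countable additivity on the right follows from monotone convergence under the $t$-integral), so it is enough to check $(\star)$ on a convenient family of sets, and I would carry this out in three stages: $f$ smooth, then $f$ Lipschitz, then $f\in W^{1,1}_{\mathrm{loc}}$.

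For smooth $f$ I would first discard the critical set $C=\{\nabla f=0\}$: on $C$ the left-hand side of $(\star)$ vanishes, while by Sard's theorem $f(C)$ is Lebesgue-null and $C\cap f^{-1}(t)=\emptyset$ for $t\notin f(C)$, so the right-hand side restricted to $C$ vanishes too, and one may assume $B\subseteq\Omega\setminus C$. Near any point of $\Omega\setminus C$ the implicit function theorem provides local $C^1$ coordinates $(y_1,\dots,y_n)$ with $y_1=f$; in these coordinates $\{f=t\}$ is a coordinate slice, and a direct Jacobian computation shows that its $(n-1)$-dimensional area element equals $|\nabla f|^{-1}$ times the Euclidean volume element of the slice, so Fubini's theorem in the $y$-variables gives $(\star)$ locally, and a countable measurable partition of $\Omega\setminus C$ subordinate to such charts assembles it globally.

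For Lipschitz $f$, the gradient $\nabla f$ exists $\mathcal{L}^n$-a.e.\ by Rademacher's theorem. Setting $\nu(B):=\int_{\mathbb{R}}\mathcal{H}^{n-1}(B\cap f^{-1}(t))\,dt$, the Eilenberg inequality gives $\nu(B)\le\mathrm{Lip}(f)\,\mathcal{L}^n(B)$, so $\nu=\theta\,\mathcal{L}^n$ for some $\theta\in L^\infty_{\mathrm{loc}}$, and by the Lebesgue differentiation theorem $\theta(x_0)=\lim_{r\to 0^+}\nu(B_r(x_0))/\mathcal{L}^n(B_r(x_0))$ for a.e.\ $x_0$. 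At a differentiability point $x_0$ of $f$ the blow-ups $f_r(y)=r^{-1}\big(f(x_0+ry)-f(x_0)\big)$ converge locally uniformly to the affine map $y\mapsto\nabla f(x_0)\cdot y$, for which $(\star)$ is an elementary Fubini computation over parallel hyperplanes with density exactly $|\nabla f(x_0)|$ (and $0$ when $\nabla f(x_0)=0$); a stability argument for the slice measures then forces $\theta(x_0)=|\nabla f(x_0)|$, which is $(\star)$. (Alternatively the Lipschitz case follows from the smooth one by mollification, but the blow-up route is self-contained.)

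Finally, the extension to $f\in W^{1,1}_{\mathrm{loc}}(\Omega)$ is where I expect the real obstacle to lie. Since both measures in $(\star)$ are inner regular it suffices to treat $B$ compact; one would mollify $f$ to obtain smooth $f_\varepsilon$ with $f_\varepsilon\to f$ and $\nabla f_\varepsilon\to\nabla f$ in $L^1_{\mathrm{loc}}(\Omega)$, apply the smooth case of $(\star)$ to each $f_\varepsilon$, and let $\varepsilon\to0$: the left-hand sides $\int_B|\nabla f_\varepsilon|\,dx$ converge to $\int_B|\nabla f|\,dx$, but $L^1$ (or a.e.) convergence of $f_\varepsilon$ does not yield convergence of the individual level sets $f_\varepsilon^{-1}(t)$, so one cannot pass to the limit pointwise in $t$ on the right. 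The clean resolution is to use that $f\in BV_{\mathrm{loc}}(\Omega)$, invoke the Fleming--Rishel coarea formula $|Df|(A)=\int_{\mathbb{R}}P(\{f>t\};A)\,dt$ for open $A\Subset\Omega$ together with $|Df|=|\nabla f|\,\mathcal{L}^n$ (valid since $f\in W^{1,1}_{\mathrm{loc}}$), and then to identify, for a.e.\ $t$, the perimeter measure $P(\{f>t\};\cdot)$ with the restriction of $\mathcal{H}^{n-1}$ to $f^{-1}(t)$ --- which holds because a Sobolev function has no jump part, so for a.e.\ $t$ the topological level set $f^{-1}(t)$ coincides $\mathcal{H}^{n-1}$-a.e.\ with the reduced boundary $\partial^*\{f>t\}$ --- after which the weighted statement for general $u$ follows by the monotone-class argument of the first paragraph. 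Equivalently one can keep the mollification and argue at the level of the $t$-integrated slice functions $t\mapsto\mathcal{H}^{n-1}(B\cap f_\varepsilon^{-1}(t))$, extracting one-sided semicontinuity in both directions via Fatou, using the coarea/Eilenberg estimate for uniform integrability, and pinning the total mass to the known limit $\int_B|\nabla f|\,dx$; making this limiting argument fully rigorous is the technical heart of the proof.
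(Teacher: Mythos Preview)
The paper does not prove this theorem: the coarea formula is stated as a preliminary tool and the proof is delegated to the references \cite{ambrosio2000functions, maggi2012sets}, so there is no ``paper's own proof'' to compare your proposal against.

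As a standalone outline your proposal is broadly sound and follows the standard architecture (reduce to $u=\chi_B$, settle the smooth case via Sard and local straightening, then Lipschitz, then $W^{1,1}_{\mathrm{loc}}$ via Fleming--Rishel). The one place where you are genuinely hand-waving is the Lipschitz step: the phrase ``a stability argument for the slice measures then forces $\theta(x_0)=|\nabla f(x_0)|$'' hides real work, since level sets do not behave well under merely uniform convergence of $f_r$ to its affine limit, and blow-up alone does not give you control on $\mathcal{H}^{n-1}$ of the slices. The textbook route at this point is not a blow-up but a Vitali-type covering of $\Omega\setminus C$ by pieces on which $f$ is bi-Lipschitz close to a linear map, combined with the area formula for the Lipschitz graphs $\{f=t\}$; your Eilenberg-inequality setup is exactly what one needs to make that work, but the differentiation-of-$\nu$ argument as written is not yet a proof. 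Your $W^{1,1}_{\mathrm{loc}}$ paragraph is honest about where the difficulty lies, and the Fleming--Rishel route you sketch is indeed the clean one.
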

Some references for results relative to the sets of finite perimeter and for the coarea formula are, for instance, \cite{ambrosio2000functions, maggi2012sets}.

\vspace{2mm}

We provide the classical definitions and results that we need in the following. The reader can refer to \cite[Chapter 1]{Schneider2014} for more details.

Let $\Omega \subset \R^n$ be a non-empty, bounded, convex set, let $B$ be the unitary ball centered at the origin and $\rho > 0$. We can write the Steiner formula for the Minkowski sum $\Omega+ \rho B$ as
\begin{equation}
    \label{Steiner_formula}
    \abs{\Omega + \rho B} = \sum_{i=0}^n \binom{n}{i} W_i(\Omega) \rho^{i} .
\end{equation}
The coefficients $W_i(\Omega)$ are known in the literature as quermassintegrals of $\Omega$. In particular, $W_0(\Omega) = \abs{\Omega}$,  $nW_1(\Omega) = P(\Omega)$ and $W_n(\Omega) = \omega_n$ where $\omega_n$ is the measure of $B$.

Formula \eqref{Steiner_formula} can be generalized to every quermassintegral, obtaining
\begin{equation} \label{Steinerquermass}
    W_j(\Omega+\rho B) = \sum_{i=0}^{n-j} \binom{n-j}{i} W_{j+i}(\Omega) \rho^i, \qquad j=0, \ldots, n-1.
\end{equation}

If $\Omega$ is a convex set with $C^2$ boundary, the quermassintegrals can be written in terms of principal curvatures of $\Omega$. More precisely, denoting with $H_j$ the $j$-th normalized elementary symmetric function of the principal curvature $\kappa_1, \ldots, \kappa_{n-1}$ of $\partial \Omega$, i.e.
 \[
 H_0 = 1, \qquad \qquad H_j = \binom{n-1}{j}^{-1} \sum_{1 \leq i_1 < \ldots < i_j \leq n-1} \kappa_{i_1} \ldots \kappa_{i_j}, \qquad j = 1,\ldots,n-1,
 \]
 then, the quermassintegrals can be written as
 \begin{equation}
     \label{quermass_con_curvature}
     W_j(\Omega) = \frac{1}{n} \int_{\partial \Omega} H_{j-1} \, d \mathcal{H}^{n-1}, \qquad j = 1,\ldots, n-1.
 \end{equation}

Furthermore, Aleksandrov-Fenchel inequalities hold true
\begin{equation}
    \label{Aleksandrov_Fenchel_inequalities}
    \biggl( \frac{W_j(\Omega)}{\omega_n} \biggr)^{\frac{1}{n-j}} \geq \biggl( \frac{W_i(\Omega)}{\omega_n} \biggr)^{\frac{1}{n-i}}, \qquad 0 \leq i < j \leq n-1,
\end{equation}
where equality holds if and only if $\Omega$ is a ball. When $i=0$ and $j=1$, formula \eqref{Aleksandrov_Fenchel_inequalities} reduces to the classical isoperimetric inequality, i.e.
\[
    P(\Omega) \geq n \omega_n^{\frac{1}{n}} \abs{\Omega}^{\frac{n-1}{n}}.
\]

\begin{definizione}
    Let $\Omega$ be a non-empty, bounded, open, and convex set of $\R^n$. We define the distance function from the boundary, and we will denote it by $ d(\cdot, \partial \Omega):\Omega \to [0,+\infty[$, as follows 
 \begin{equation*}
     d(x,\partial\Omega):=\inf_{y\in\partial\Omega}\abs{x-y}.
 \end{equation*}

 The quantity
\begin{equation}
    \label{inradius:def}
r(\Omega)=\sup_{x\in\Omega}d(x,\partial\Omega),
\end{equation}
is the \emph{inradius} of the set $\Omega$, and it represents the radius of the largest ball that can be contained in $\Omega$.
\end{definizione}

\begin{definizione}
    \label{support}
Let $\Omega$ be a convex set. The \emph{support} function of $\Omega$ is defined as
\begin{equation*}
        h(\Omega, u):=\max_{x\in \Omega} (x\cdot u), \qquad u\in \mathbb{S}^{n-1}.
    \end{equation*}
\end{definizione}
The support function allows us to define, for a convex set $\Omega$, two geometrical quantities. The \emph{width function} $w(\Omega, \cdot)$ of $\Omega$
is defined as
$$w(\Omega, u):= h(\Omega, u) + h(\Omega, -u) \quad \text{for } u\in \mathbb{S}^{n-1}.$$

The quantity $w(\Omega, u)$ is the thickness of $\Omega$ in the direction $u$ and it represents the distance between
the two support hyperplanes of $\Omega$ orthogonal to $u$. The minimum of the width function,
$$w_\Omega:=\min_{u\in \mathbb{S}^{n-1}} w(\Omega, u)$$
is the minimal width of $\Omega$.

\begin{definizione}
    Let $\Omega_l$ be a sequence of non-empty, bounded, open, and convex sets of $\mathbb{R}^n$. We say that $\Omega_l$ is a sequence of thinning domains if
    \begin{equation}
        \dfrac{w_{\Omega_l}}{\text{diam}(\Omega_l)}\xrightarrow{l \to 0}0.
    \end{equation}
 
    In particular, if $l>0$ and  $C$ is a bounded, open, and convex set of $\R^{n-1}$ with unitary $(n-1)$-dimensional measure, then, if $l \to 0$, the sequence 
    \begin{equation}\label{thin_rect}
    \Omega_l = l^{-\frac{1}{n-1}}C \times \left[-\frac{l}{2}, \frac{l}{2}\right]
\end{equation}
    is called a sequence of thinning cylinders. Moreover, in the case $n=2$, the sequence \eqref{thin_rect} is called sequence of thinning rectangles.
    \begin{figure}[h]
        \centering
        \tikzset{every picture/.style={line width=0.75pt}} 

\begin{tikzpicture}[x=0.65pt,y=0.65pt,yscale=-1,xscale=1]

\draw  [fill={rgb, 255:red, 191; green, 186; blue, 186 }  ,fill opacity=0.42 ] (211.04,198.71) .. controls (196.02,190.03) and (190.87,183) .. (199.54,183) -- (438.23,183) .. controls (446.91,183) and (466.13,190.03) .. (481.15,198.71) -- (562.79,245.84) .. controls (577.82,254.52) and (582.97,261.55) .. (574.29,261.55) -- (335.6,261.55) .. controls (326.92,261.55) and (307.71,254.52) .. (292.68,245.84) -- cycle ;
\draw   (211.04,145.71) .. controls (196.02,137.03) and (190.87,130) .. (199.54,130) -- (438.23,130) .. controls (446.91,130) and (466.13,137.03) .. (481.15,145.71) -- (562.79,192.84) .. controls (577.82,201.52) and (582.97,208.55) .. (574.29,208.55) -- (335.6,208.55) .. controls (326.92,208.55) and (307.71,201.52) .. (292.68,192.84) -- cycle ;
\draw    (331,208.56) -- (330.78,260.69) ;
\draw    (578.33,205.9) -- (578.11,258.03) ;
\draw    (195.33,132.9) -- (195.11,185.03) ;
\draw  [dash pattern={on 4.5pt off 4.5pt}]  (443.33,131.23) -- (443.11,183.36) ;
\draw    (590.59,207.94) -- (590.59,261.36) ;
\draw [shift={(590.59,261.36)}, rotate = 270] [color={rgb, 255:red, 0; green, 0; blue, 0 }  ][line width=0.75]    (0,5.59) -- (0,-5.59)   ;
\draw [shift={(590.59,207.94)}, rotate = 270] [color={rgb, 255:red, 0; green, 0; blue, 0 }  ][line width=0.75]    (0,5.59) -- (0,-5.59)   ;

\draw (383.78,221.72) node [anchor=north west][inner sep=0.75pt]  [font=\tiny]  {$l^{-\frac{1}{n-1}} C$};
\draw (268.41,117.72) node [anchor=north west][inner sep=0.75pt]  [font=\tiny]  {$\Omega _{l}$};
\draw (594.96,232.09) node [anchor=north west][inner sep=0.75pt]  [font=\tiny]  {$l$};

\end{tikzpicture}
        \caption{Thinning cylinder.} \label{fig:M2}
    \end{figure}
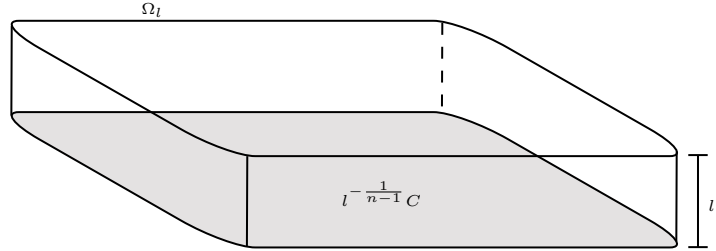
\end{definizione}
We remark that the distance function is concave, as a consequence of the convexity of $\Omega$.
The superlevel sets of the distance function
\begin{equation*}
    \Omega^t=\set{x\in \Omega \, : \, d(x,\partial\Omega)>t}, \qquad t\in[0, r(\Omega)]
\end{equation*}
 are called \emph{inner parallel sets}.
In what follows, we want to work with \emph{convex} functions, so we will use the following notation
\begin{equation*}
    \Omega_t=\set{x\in \Omega \, : \, -d(x,\partial\Omega)<t}, \qquad t\in[- r(\Omega), 0],
\end{equation*}

 \begin{equation*}
 \mu(t)= \abs{\Omega_t}, \qquad P(t)=P(\Omega_t),\qquad t\in[-r(\Omega),0].
 \end{equation*}
 By coarea formula, recalling that $\abs{\nabla d}=1$ almost everywhere, we have 
 $$\mu(t)=\int_{\Set{-d<t}} \,  dx = \int_{\Set{-d<t}} \frac{\abs{\nabla d}}{\abs{\nabla d}} \,  dx= \int_{-r(\Omega)}^{t} \frac{1}{\abs{\nabla d}} \int_{\Set{-d=s}} d\mathcal{H}^{n-1} \, ds= \int_{-r(\Omega)}^{t} P(s)\; ds;$$
hence, the function $\mu(t)$ is absolutely continuous, \emph{increasing} and its derivative is  
\begin{equation}\label{eq:dermu}
    \mu'(t)=P(t)\qquad a.e.
\end{equation}

{By the Brunn-Minkowski inequality (\cite[Theorem 7.4.5]{Schneider2014}) and the concavity of the distance function, the map
\begin{equation*}
    s \mapsto P(\Omega^s)^{\frac{1}{n-1}}
\end{equation*}
is concave in $[0,r({\Omega})]$, hence absolutely continuous in $(0,r({\Omega}))$. Moreover, there exists its right derivative at $0$ and it is negative, since $P(\Omega^s)^{\frac{1}{n-1}}$ is strictly monotone decreasing, hence almost everywhere differentiable. As a consequence, the function $P(s)$ is \emph{strictly increasing} and concave in $[-r(\Omega), 0]$.}

We recall the following results concerning the approximation of the distance to the boundary with $C^2$ functions (see \cite[Proposition 3.1]{DellaPietra2012UpperBF} for the proof)
\begin{prop}\label{approx}
Let $\Omega\subset\R^n$ be an open, bounded  convex set of class $C^2$. Then, there exists a
  sequence of functions $\{d_\varepsilon(x)\}_{\varepsilon>0}$, $x\in
  \bar{\Omega}$, such
  that:
  \begin{enumerate}
  \item $d_\varepsilon$ concave in $\Omega$, $d_\varepsilon\equiv 0$ on $\partial \Omega$ and
    $d_\varepsilon\in C^2(\Omega)\cap C(\bar\Omega)$;
  \item $0\le d_\varepsilon \le d$, and $d_\varepsilon\rightarrow d$ uniformly in
    $\bar \Omega$;
  \item $|\nabla d_\varepsilon|\le 1$ in $\Omega$.
  \end{enumerate}
\end{prop}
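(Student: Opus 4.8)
This is a routine but slightly delicate regularization of the distance function, and I would prove it by one of two standard devices; in both of them every listed property is immediate except for a single genuinely technical point.

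\emph{Mollifying the signed distance.} Since $\Omega$ is convex, its signed distance function $\delta$ (equal to $d$ on $\bar\Omega$ and to $-\mathrm{dist}(\cdot,\Omega)$ outside) is concave and $1$-Lipschitz on $\R^n$, and because $\partial\Omega\in C^2$ it is in addition $C^2$ on a collar $\{0\le d<\tau_0\}$. Put $\delta_\rho:=\delta*\phi_\rho$ with a radial mollifier. Then $\delta_\rho\in C^\infty$; it is still concave, since convolution with a probability density preserves concavity; $\abs{\nabla\delta_\rho}=\abs{(\nabla\delta)*\phi_\rho}\le\norma{\nabla\delta}_{L^\infty}\le1$; and by Jensen's inequality (for the concave $\delta$ and the symmetric kernel) $\delta-\rho\le\delta_\rho\le\delta$, so $\delta_\rho\to\delta=d$ uniformly on $\bar\Omega$. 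Thus $\delta_\rho$ already satisfies (2)--(3) together with the concavity and the interior regularity of (1); the one missing property is $d_\varepsilon\equiv0$ on $\partial\Omega$ (the function $\delta_\rho$ dips $O(\rho)$ below $0$ in a thin shell near $\partial\Omega$). The fix is to keep $d$ itself on the collar $\{d\le\tau_0/2\}$, where it is already smooth and concave, use a smoothed version further in, and glue the two by a concavity-preserving operation (a pointwise minimum with an appropriate smooth concave majorant after a small downward shift); this is the one technical step, and it is the step carried out in detail in \cite[Prop.\ 3.1]{DellaPietra2012UpperBF}.

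\emph{Vanishing viscosity.} Alternatively, let $d_\varepsilon$ solve $-\varepsilon\Delta d_\varepsilon+\abs{\nabla d_\varepsilon}=1$ in $\Omega$ with $d_\varepsilon=0$ on $\partial\Omega$ (a uniformly elliptic quasilinear problem, so $d_\varepsilon\in C^2(\Omega)\cap C(\bar\Omega)$ by Schauder estimates, with the boundary condition holding exactly). Then $0\le d_\varepsilon$, since $0$ is a subsolution, and $d_\varepsilon\le d$, since the concave $d$ is a viscosity supersolution ($-\varepsilon\Delta d+\abs{\nabla d}\ge1$, because any $C^2$ function touching the concave $d$ from below does so only off the ridge, where $\abs{\nabla d}=1$ and the test function has nonpositive Laplacian); and $d_\varepsilon\to d$ uniformly on $\bar\Omega$ by stability of viscosity solutions. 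Moreover the gradient bound (3) is automatic once concavity is known: if $f$ is concave with $f=0$ on $\partial\Omega$ and $0\le f\le d$, then restricting $f$ to the segment from $x$ to the point where the ray $t\mapsto x-t\,\nabla f(x)/\abs{\nabla f(x)}$ first meets $\partial\Omega$ (which happens at a time $t^*\ge d(x)$) and using concavity of $f$ along that segment gives $\abs{\nabla f(x)}\le f(x)/t^*\le f(x)/d(x)\le1$. So in this route the single substantial step is the concavity of $d_\varepsilon$, which I would obtain from a concavity maximum principle (Korevaar, Kennington, Alvarez--Lasry--Lions) applied to $(x,y)\mapsto d_\varepsilon(\tfrac{x+y}2)-\tfrac12\big(d_\varepsilon(x)+d_\varepsilon(y)\big)$ on $\bar\Omega\times\bar\Omega$, exploiting the convexity of $\Omega$ and the structure of the equation (linear in $D^2 d_\varepsilon$, convex in $\nabla d_\varepsilon$, constant right-hand side).

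In short, every item but one drops out of elementary comparison or mollification arguments — and property (3) is in fact a consequence of (1) and (2), by the concavity estimate above — so the whole weight of the proof sits either on the boundary-gluing in the first route or on the concavity of the regularized solution in the second; this is why the statement is quoted from \cite{DellaPietra2012UpperBF} rather than reproved here.
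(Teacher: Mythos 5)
The paper itself does not prove this proposition: it is stated with an explicit pointer to \cite[Proposition 3.1]{DellaPietra2012UpperBF}, so there is no internal argument to compare yours against. Judged on its own terms, your write-up gets the routine parts right: the signed distance of a convex set is indeed concave and $1$-Lipschitz, mollification preserves both properties and yields $\delta-\rho\le\delta_\rho\le\delta$, the comparison statements in the viscosity route are correct (including the check that the concave $d$ is a viscosity supersolution, which is fine because at ridge points no smooth function touches $d$ from below), and your observation that item (3) follows from (1)--(2) by restricting to the steepest-descent ray and using $t^*\ge d(x)$ is correct and genuinely useful. However, in both routes the step that carries the entire statement --- producing a function that is simultaneously concave, $C^2$ inside $\Omega$, and \emph{exactly} zero on $\partial\Omega$ --- is not actually performed.

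Concretely: in the mollification route, a pointwise minimum of two concave $C^2$ functions is concave but in general only Lipschitz across the contact set, so ``min with a shifted smooth concave majorant'' does not deliver $d_\varepsilon\in C^2(\Omega)$; smoothed minima (soft-min type constructions) do preserve concavity but then no longer vanish exactly on $\partial\Omega$, which is precisely the defect you set out to repair. In the vanishing-viscosity route, the concavity of the solution of $-\varepsilon\Delta d_\varepsilon+\abs{\nabla d_\varepsilon}=1$ is asserted via a concavity maximum principle without verifying any structure or boundary hypotheses; plain concavity (as opposed to concavity of a power or of a transform) is known to be delicate for semilinear/quasilinear problems in convex domains --- the torsion function of a convex set need not be concave --- so this cannot be invoked as an off-the-shelf fact and would itself require a dedicated argument for this Hamiltonian. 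So, as a blind proof, there is a genuine gap, located exactly where you flag it; deferring that step to \cite{DellaPietra2012UpperBF} reproduces the paper's treatment (a citation) rather than supplying the missing proof.
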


\subsection{\texorpdfstring{$k$}{k}-Hessian Operators} \label{hessian_operators}
Let $\Omega\subset \R^n$ be a bounded, connected, open set of class $C^2$, and let us consider $u \in C^2(\Omega)$. The \emph{$k$-Hessian operator} $S_k\left(D^2u\right)$ is defined
 as the $k$-th elementary symmetric function of $D^2u$, i. e.
 \begin{equation*}
    S_k(D^2 u)= \sum_{1\le i_1< \dots < i_k\le n} \lambda_{i_1}\cdots\lambda_{i_k}, \quad k=1,\dots, n,
\end{equation*}
being $\lambda_i$ the eigenvalue of the Hessian matrix of $u$.

In general, for $k\neq 1$, $S_k(D^2u)$ is not elliptic
unless we consider its restriction to the class of $k$-convex functions (see for instance \cite{caffarelli})
$$\mathcal{C}_k=\left\{u\in C^2(\Omega)\,:\,S_i(D^2u)\geq 0 \text{
in }\Omega, i=1,2,...,k \right\}.$$ In particular 
$\mathcal{C}=\mathcal{C}_n(\Omega)$ is equal to  the class of $C^2(\Omega)$
convex functions.
A function $u \in C^2(\Omega)$ is called strictly $k$-convex if the inequalities in the definition of the cone $\mathcal{C}_k(\Omega)$ hold strictly.

The
operator $S_k^{1/k}(D^2u)$, for $k=1,...,n$, is homogeneous of degree
$1$, so using the  notation
$$
S_k^{ij}(D^2u)
= \frac{\partial }{\partial u_{ij}}S_k(D^2u),$$
where the indices $i, j$ denote partial derivatives, Euler's identity for homogeneous functions yields
$$
S_k(D^2u) = \frac{1}{k} \sum_{i,j}S_k^{ij}(D^2u) u_{ij}.$$

One can show, by direct calculation, that  $\left(S_k^{1j}(D^2u), \dots, S_k^{nj}(D^2u)\right)$ is divergence-free, allowing $S_k(D^2u)$ to be expressed in divergence form

\begin{equation}
    \label{skdiv}
    S_k(D^2u)=\frac{1}{k}\left(S_k^{ij}(D^2u) u_j\right)_i.
\end{equation} 
For convenience, when the argument of $S_k^{ij}$ is omitted, we assume  $S_k^{ij}=S_k^{ij}(D^2u)$.

If $u\in C^2(\Omega)$ and $t$ is a regular value of $u$, the $k$-Hessian operator can be related to the $(k-1)$-th mean curvature
on the boundary of $\{u\le t\}$ via the identity (see, for instance,  \cite{reilly73, trudinger97})  
\begin{equation}
    \label{hksk}
   \binom{n-1}{k-1} H_{k-1}=\frac{S_k^{ij}u_iu_j}{\abs{\nabla u}^{k+1}}.
\end{equation}

\section{P\'olya torsion functional}\label{Sec3}
The aim of this section is to study the shape functional
\begin{equation}\label{FunzconTors}
    \frac{T_k(\Omega)W_k(\Omega)P(\Omega)^k}{\abs{\Omega}^{2k+1}}.
\end{equation}
We start considering the functional
\begin{equation}\label{aux:functional}
    F_k[w]:=\int_\Omega-w\,dx-\frac{1}{k+1}\int_\Omega-wS_k(D^2w)\,dx,
\end{equation}
where $w\in\mathcal{C}$, for which the following result holds.
\begin{lemma}\label{lemma1}
    For all $w\in\mathcal{C}$, it holds  $\displaystyle F_k[w]\leq\frac{k}{k+1}T_k(\Omega)^\frac{1}{k}$.
\end{lemma}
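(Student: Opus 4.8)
The plan is to read off the estimate directly from the variational characterization \eqref{varcar} of $T_k(\Omega)$ and then perform a one–variable optimization. First I would fix $w\in\mathcal{C}$ and abbreviate
\[
 a:=\int_\Omega -w\,dx,\qquad b:=\int_\Omega -w\,S_k(D^2w)\,dx,
\]
so that $F_k[w]=a-\tfrac{1}{k+1}b$ by \eqref{aux:functional}. The first point to record is that $a\ge 0$ and $b\ge 0$: since $w$ is convex and vanishes on $\partial\Omega$ it attains its maximum on the boundary, hence $w\le 0$ in $\Omega$, giving $a\ge 0$; and since $\mathcal{C}=\mathcal{C}_n\subseteq\mathcal{C}_k$, all eigenvalues of $D^2w$ are nonnegative, so $S_k(D^2w)\ge 0$ and therefore $b\ge 0$ as well.

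Next I would feed $w$ into \eqref{varcar}. When $b>0$, $w$ is an admissible competitor and the characterization gives $a^{k+1}/b\le T_k(\Omega)$, i.e. $b\ge a^{k+1}/T_k(\Omega)$; substituting into the formula for $F_k[w]$ yields
\[
 F_k[w]\le a-\frac{a^{k+1}}{(k+1)\,T_k(\Omega)}=:g(a).
\]
It then remains to maximize $g$ over $[0,\infty)$. Since $g'(a)=1-a^k/T_k(\Omega)$ vanishes only at $a_\ast=T_k(\Omega)^{1/k}$, and is positive before and negative after it, $a_\ast$ is the global maximum, and a direct computation gives $g(a_\ast)=\tfrac{k}{k+1}T_k(\Omega)^{1/k}$, which is exactly the claimed bound. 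The degenerate case $b=0$ must be disposed of separately: it forces $a=0$ as well, since otherwise the supremum in \eqref{varcar} would be infinite, contradicting the finiteness of $T_k(\Omega)$; then $F_k[w]=0$ and the inequality is trivial.

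I do not expect any serious obstacle here — once the reduction to \eqref{varcar} is made, the proof is essentially two lines. The only steps requiring a little care, both routine, are the sign bookkeeping for $a$ and $b$ (where convexity, the homogeneous Dirichlet condition, and the inclusion $\mathcal{C}_n\subseteq\mathcal{C}_k$ enter) and the separate treatment of $b=0$, where one cannot divide by $b$. As a byproduct, the argument identifies the equality case: $F_k[w]=\tfrac{k}{k+1}T_k(\Omega)^{1/k}$ exactly when $w$ realizes the maximum in \eqref{varcar} and is normalized so that $\int_\Omega -w\,dx=T_k(\Omega)^{1/k}$, i.e. when $w$ is the suitably scaled $k$-torsion function; this observation will be convenient when tracking sharpness in the subsequent theorems.
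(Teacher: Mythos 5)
Your proof is correct and is essentially the paper's own argument run in the opposite order: the paper maximizes $\lambda\mapsto F_k[\lambda w]$ and then bounds the resulting quotient $\bigl(\int_\Omega -w\,dx\bigr)^{k+1}\big/\int_\Omega -w\,S_k(D^2w)\,dx$ by $T_k(\Omega)$ via \eqref{varcar}, whereas you invoke \eqref{varcar} first to eliminate the denominator and then optimize over the single scalar $a=\int_\Omega -w\,dx$, which is the same one-variable optimization. Your explicit sign bookkeeping and the separate treatment of the degenerate case $\int_\Omega -w\,S_k(D^2w)\,dx=0$ (where one cannot divide) are small precautions the paper leaves implicit, so no gap here.
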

\begin{proof}
    Let $w\in\mathcal{C}_k$ and $\lambda\geq0$, let us consider the function 
    \begin{equation}\label{flambda}
        f(\lambda):=F_k[\lambda w]=\lambda\int_\Omega -w\,dx-\frac{\lambda^{k+1}}{k+1}\int_\Omega-wS_k(D^2w)\,dx\leq \max_{\lambda\geq 0}f(\lambda).
    \end{equation}
    If we optimize with respect to $\lambda>0$,
    \begin{equation*}
        f'(\lambda)=\int_\Omega-w\,dx-\lambda^k\int_\Omega-wS_k(D^2w)\,dx
    \end{equation*}
    we find
    \begin{equation*}
        \lambda_{\text{max}}=\frac{\left(\int_\Omega-w\,dx\right)^\frac{1}{k}}{\left(\int_\Omega-wS_k(D^2w)\,dx\right)^\frac{1}{k}}.
    \end{equation*}
    Hence, we can conclude
    \begin{equation*}
        F_k[w]\leq F_k[\lambda_\text{max}w]=\frac{k}{k+1}T_k(\Omega)^\frac{1}{k}.
    \end{equation*}
\end{proof}
Let us consider the class
\begin{equation*}
    \mathcal{C}^-=\{g:[M,0]\rightarrow\R \text{ : $g(0)=0$, $g$ increasing and convex}\}.
\end{equation*}
\begin{oss}
    If $g\in\mathcal{C}^-$ and $u$ is convex, then $g\circ u$ is also convex.
\end{oss}
\begin{lemma}\label{lemma2}
    Let $u\in\mathcal{C}_n$ and let $g\in\mathcal{C^-}$, then it holds
    \begin{equation*}
        \left\{\frac{k+1}{k}\max_{g\in\mathcal{C}^-}F_k[g\circ u]\right\}^k=k\left[\bigintsss_M^0\frac{\mu(t)^{1+\frac{1}{k}}}{\left(\int_{\{u=t\}}\frac{S_k^{ij}(D^2u)u_i u_j}{|\nabla u|} \right)^\frac{1}{k}} \right]^k.
    \end{equation*}
\end{lemma}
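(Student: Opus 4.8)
The plan is to reduce the maximization of $F_k[g\circ u]$ to a one-dimensional variational problem by exploiting the layer-cake structure of the functionals appearing in $F_k$. The key observation is that both $\int_\Omega -(g\circ u)\,dx$ and $\int_\Omega -(g\circ u)\,S_k(D^2(g\circ u))\,dx$ can be rewritten as integrals over the level $t\in[M,0]$ of the function $u$, with the radial profile $g$ entering only through $g(t)$ and $g'(t)$. For the first term, the coarea formula gives $\int_\Omega -(g\circ u)\,dx=\int_M^0 (-g(t))\,\mu'(t)\,dt$ after an integration by parts (using $g(0)=0$), which becomes $\int_M^0 g'(t)\,\mu(t)\,dt$. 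For the Hessian term, I would use the divergence form \eqref{skdiv} together with the chain rule for $S_k$ under composition with a one-variable increasing convex function: writing $v=g\circ u$, one has $Dv=g'(u)\,Du$ and $D^2v=g'(u)D^2u+g''(u)\,Du\otimes Du$, and since $S_k^{ij}(D^2u)u_iu_j$ contracts against the rank-one part in a controlled way, the quantity $-v\,S_k(D^2v)$ integrated over $\Omega$ reduces, via \eqref{skdiv} and integration by parts on the level sets, to $\int_M^0 (g'(t))^{k+1}\Big(\int_{\{u=t\}}\frac{S_k^{ij}(D^2u)u_iu_j}{|\nabla u|}\,d\mathcal H^{n-1}\Big)dt$ up to the combinatorial constant $k$ coming from Euler's identity. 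Let me abbreviate the inner boundary integral as $\Phi(t):=\int_{\{u=t\}}\frac{S_k^{ij}(D^2u)u_iu_j}{|\nabla u|}\,d\mathcal H^{n-1}$.

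Thus $\frac{k+1}{k}F_k[g\circ u]$ is, up to the factor $\frac{k+1}{k}$ normalizing the torsion-type ratio, expressed as $\frac{k+1}{k}\int_M^0\!\big(g'(t)\mu(t)-\tfrac{1}{k+1}(g'(t))^{k+1}\Phi(t)\big)dt$. The next step is to optimize pointwise in $g'(t)=:p\ge 0$: the integrand $p\,\mu(t)-\tfrac{1}{k+1}p^{k+1}\Phi(t)$ is concave in $p$ and maximized at $p=\big(\mu(t)/\Phi(t)\big)^{1/k}$, yielding the pointwise maximum $\frac{k}{k+1}\,\mu(t)^{1+1/k}\Phi(t)^{-1/k}$. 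Since $g\mapsto g'$ ranges over all nonnegative functions (with $g$ recovered by integrating from $0$ and automatically increasing; convexity of $g$ is where I must be slightly careful — see below), the supremum over $\mathcal C^-$ of $\frac{k+1}{k}F_k[g\circ u]$ equals $\int_M^0 \mu(t)^{1+1/k}\Phi(t)^{-1/k}\,dt$. Raising both sides to the $k$-th power and tracking the constants produces exactly $k\big[\int_M^0 \mu(t)^{1+1/k}\Phi(t)^{-1/k}\,dt\big]^k$, which is the claimed identity (the extra factor $k$ on the right matches $\big(\frac{k+1}{k}\cdot\frac{k}{k+1}\big)^k\cdot$ — I will recheck that the bookkeeping gives precisely the stated $k[\cdots]^k$, presumably the $k$ arises from the Euler-identity normalization in the Hessian term rather than from the optimization).

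The main obstacle I anticipate is twofold. First, the composition formula for $S_k(D^2(g\circ u))$ and its pairing with $u$ in $\int_\Omega -(g\circ u)S_k(D^2(g\circ u))$: one needs that the $g''$-term, which is nonnegative by convexity of $g$, either integrates to something with a definite sign or is exactly absorbed; the cleanest route is to use the divergence structure \eqref{skdiv} directly for $v=g\circ u$ and integrate by parts, so that only $|\nabla v|=g'(u)|\nabla u|$ and the level-set measures of $u$ survive — this is precisely the web-function computation and should be carried out level set by level set using the coarea formula \eqref{coarea}. Second, there is a regularity/admissibility subtlety: $d$ itself is only Lipschitz, so to make the level-set integrals and the integration by parts rigorous I would first run the argument with the approximating $C^2$ functions $d_\varepsilon$ from Proposition \ref{approx} (or, more to the point, treat general $u\in\mathcal C_n$ as in the statement), and the attainment of the pointwise optimum $g'(t)=(\mu(t)/\Phi(t))^{1/k}$ must be checked to be compatible with $g\in\mathcal C^-$, i.e. that this profile is nondecreasing in $t$ — monotonicity of $\mu$ is clear, and $\Phi$ should be controlled via the concavity properties of $P(s)$ recorded after \eqref{eq:dermu}; if strict admissibility fails one argues by density of such profiles in $\mathcal C^-$ to recover the supremum.
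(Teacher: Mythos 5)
Your strategy coincides with the paper's: both terms of $F_k[g\circ u]$ are reduced to one-dimensional integrals in the level parameter $t$ (layer-cake for the volume term; \eqref{skdiv}, an integration by parts and the coarea formula \eqref{coarea} for the Hessian term), and then one optimizes pointwise in $p=g'(t)$. The one substantive gap is the step you describe as the rank-one part being contracted ``in a controlled way'': after \eqref{skdiv} and integration by parts you are left with $\tfrac1k\int_\Omega S_k^{ij}(D^2(g\circ u))(g\circ u)_i(g\circ u)_j\,dx$, and the coarea formula by itself does not remove $D^2(g\circ u)$ from the level-set integrand, so it is not accurate that ``only $|\nabla v|$ and the level-set measures of $u$ survive'' without a further identity. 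What is needed, and what the paper supplies, is \eqref{hksk}: $S_k^{ij}(D^2v)v_iv_j/|\nabla v|^{k+1}$ equals $\binom{n-1}{k-1}H_{k-1}$ of the level set, a purely geometric quantity, and since $\{g\circ u=g(t)\}=\{u=t\}$ one applies \eqref{hksk} to $v=g\circ u$ and then back to $u$, picking up exactly the factor $(g'(t))^{k+1}$. Equivalently, you could prove directly the algebraic identity $S_k^{ij}(D^2(g\circ u))(g\circ u)_i(g\circ u)_j=(g'(u))^{k+1}S_k^{ij}(D^2u)u_iu_j$, using that $S_k^{ij}(A+\tau\,p\otimes p)p_ip_j=S_k^{ij}(A)p_ip_j$ (because $S_k(A+\tau\,p\otimes p)$ is affine in $\tau$) together with the $(k-1)$-homogeneity of $S_k^{ij}$; either way, this identity must be stated and used, since it is the heart of the lemma.

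The bookkeeping you left open resolves exactly as you guessed: keeping the factor $\tfrac1k$ from \eqref{skdiv}, the reduced functional is $\int_M^0\bigl[g'(t)\mu(t)-\tfrac{1}{k(k+1)}(g'(t))^{k+1}\Phi(t)\bigr]dt$ in your notation, so the pointwise optimizer is $p=\bigl(k\mu(t)/\Phi(t)\bigr)^{1/k}$ rather than $\bigl(\mu(t)/\Phi(t)\bigr)^{1/k}$, the pointwise maximum is $\tfrac{k}{k+1}\,k^{1/k}\mu(t)^{1+1/k}\Phi(t)^{-1/k}$, and raising $\tfrac{k+1}{k}\max_g F_k$ to the power $k$ then produces precisely the factor $k$ in the statement; so the $k$ does come from the Euler-identity normalization, as you suspected. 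Finally, your concern about admissibility of the optimal slope profile (whether $t\mapsto(k\mu/\Phi)^{1/k}$ is nondecreasing so that the corresponding $g$ lies in $\mathcal{C}^-$) is legitimate and is what the ``$\geq$'' half of the claimed equality requires; the paper's proof optimizes pointwise exactly as you do and does not comment on this, so you are not behind the paper here, and an approximation/density argument of the kind you sketch is a reasonable way to make that step airtight.
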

\begin{proof}
Let $u$ be a convex function that vanishes on $\partial\Omega$ and let $M$ be its minimum. We want to rewrite $F_k[g\circ u]$ in terms of $g$, so the first integral becomes
\begin{equation*} 
\int_\Omega-g\circ u\,dx=\int_\Omega \left(\int_u^0 g'(t)\,dt\right) \, dx=\int_M^0 g'(t)\mu(t)\,dt.
    \end{equation*}
    On the other hand, the second integral can be rewritten thanks to \eqref{skdiv} and the coarea formula
    \begin{equation*}
        \begin{split}
            \int_\Omega(-g\circ u)S_k(D^2(g\circ u))\,dx&=\frac{1}{k}\int_\Omega (-g\circ u)(S_k^{ij}(D^2(g\circ u))(g\circ u)_i)_j\,dx\\
            &=\frac{1}{k}\int_\Omega S_k^{ij}(D^2(g\circ u))(g\circ u)_i(g\circ u)_j\,dx\\
            &=\frac{1}{k}\int_{g(M)}^0\int_{\{g(u)=s\}}\frac{S_k^{ij}(D^2(g\circ u))(g\circ u)_i(g\circ u)_j}{|\nabla g\circ u|}\,d\mathcal{H}^{n-1}\,ds.  
             \end{split}
    \end{equation*}
    Thanks to formula \eqref{hksk}, we get
    \begin{equation*}
        \begin{split}
        \int_\Omega(-g\circ u)S_k(D^2(g\circ u))\,dx&=\frac{1}{k}\int_{g(M)}^0\int_{\{g(u)=s\}}H_{k-1}(\{g\circ u=s\})|\nabla g\circ u|^k\,d\mathcal{H}^{n-1}\,ds\\
            &=\frac{1}{k}\int_M^0g'(t)^{k+1}\int_{\{u=t\}}H_{k-1}(\{u=t\})|\nabla u|^k\,d\mathcal{H}^{n-1}\,dt\\
            &=\frac{1}{k}\int_M^0g'(t)^{k+1}\int_{\{u=t\}}\frac{S_k^{ij}(D^2u)u_iu_j}{|\nabla u|}\,d\mathcal{H}^{n-1}\,dt.
       \end{split}
       \end{equation*}
    To conclude with the claim, we can proceed as in \eqref{flambda}, and we obtain
    \begin{equation*}
        \begin{split}
            \max_{g\in \mathcal{C}^-}F_k[g\circ u]&= \max_{g\in \mathcal{C}^-}\int_M^0\left[g'(t)\mu(t)-\frac{1}{k(k+1)}g'(t)^{k+1}\int_{\{u=t\}}\frac{S_k^{ij}(D^2u)u_iu_j}{\abs{\nabla u}}\,d\mathcal{H}^{n-1} \right]dt\\
            &=\max_{s\geq 0}\left\{s\mu(t)-\frac{1}{k(k+1)}s^{k+1}\int_{\{u=t\}}\frac{S_k^{ij}(D^2u)u_iu_j}{\abs{\nabla u}}\,d\mathcal{H}^{n-1} \right\}
        \end{split}
    \end{equation*}
    where the optimal $s$ is
    \begin{equation*}
        s_{max}=\left(\frac{k\mu(t)}{\int_{\{u=t\}}\frac{S_k^{ij}(D^2u)u_iu_j}{\abs{\nabla u}}\,d\mathcal{H}^{n-1}} \right)^\frac{1}{k}.
    \end{equation*}
\end{proof}
Before proving Theorem \ref{teorema:torsione}, we need the following lemma.
\begin{lemma}\label{concInfLap}
    Let $\Omega\subseteq\R^n$ be a convex set, and let $f:\Omega\rightarrow\R$ be a concave function, then $(D^2f\cdot\nabla f,\nabla f)\leq0$.
\end{lemma}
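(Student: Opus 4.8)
The plan is to recognize Lemma \ref{concInfLap} as nothing more than the pointwise statement that the negative semidefinite Hessian of a concave function, tested against the specific vector $\nabla f$, produces a nonpositive number. So the first move is to reduce to the smooth case: in all the applications (to the $C^2$ approximants $d_\varepsilon$ of Proposition \ref{approx}, and to $C^2$ convex/concave torsion-type functions) $f$ is $C^2$, so I would first prove the inequality for $f\in C^2(\Omega)$ and only afterwards, if full generality is desired, extend it to merely concave $f$.

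For $f\in C^2(\Omega)$, the key step is the classical second-order characterization of concavity on the convex set $\Omega$: the Hessian $D^2 f(x)$ is negative semidefinite at every $x\in\Omega$, i.e. $\sum_{i,j} f_{ij}(x)\,\xi_i\xi_j\le 0$ for all $\xi\in\R^n$. Specializing $\xi=\nabla f(x)$ gives immediately $(D^2 f(x)\cdot\nabla f(x),\nabla f(x))=\sum_{i,j} f_{ij}(x)f_i(x)f_j(x)\le 0$, which is the claim. The more geometric way I would phrase the same argument: fix $x_0\in\Omega$, consider the affine segment $t\mapsto x_0+t\nabla f(x_0)$ (which stays in $\Omega$ for $\abs{t}$ small, $\Omega$ being open and convex), and observe that $t\mapsto f(x_0+t\nabla f(x_0))$ is concave; since the path is affine, its second derivative at $t=0$ is exactly $(D^2 f(x_0)\nabla f(x_0),\nabla f(x_0))$, hence $\le 0$.

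For a general concave $f$ (such as the distance function $d(\cdot,\partial\Omega)$), I would argue a.e.: by Alexandrov's theorem $f$ is twice differentiable at a.e.\ point, and at each such point the second-order Taylor expansion of a concave function forces $D^2 f\preceq 0$, so the argument above applies pointwise a.e. Alternatively, one can avoid the sign of $D^2 f$ altogether and use monotonicity of the superdifferential, $(\nabla f(x)-\nabla f(y))\cdot(x-y)\le 0$: choosing $y=x-h\nabla f(x)$ with $h>0$ small enough that $y\in\Omega$, dividing by $h$, and letting $h\to 0^+$ recovers $(D^2 f(x)\nabla f(x),\nabla f(x))\le 0$ wherever $f$ is twice differentiable.

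I do not expect a genuine obstacle here: the statement is elementary. The only mild subtlety is the low regularity of a general concave function, which is why I would either restrict to the $C^2$ setting that the paper actually uses (applying the lemma to the $d_\varepsilon$ of Proposition \ref{approx}) or dispatch the non-smooth case with one of the two standard devices above.
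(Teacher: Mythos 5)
Your argument is correct and is essentially the paper's proof: the ``geometric'' phrasing you give (restricting $f$ to the affine line through $x_0$ in the direction $\nabla f(x_0)$ and using concavity of the one-variable restriction) is exactly the computation in the paper, and your first phrasing via negative semidefiniteness of $D^2 f$ is the same fact stated pointwise. The additional remarks on the non-smooth case (Alexandrov, monotone superdifferential) are sound but not needed, since the lemma is only applied to the $C^2$ approximants $d_\varepsilon$.
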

\begin{proof}
    Let $\overline{x}\in\Omega$ and let us consider the curve $\gamma(t)=\overline{x}+t\nabla f(\overline{x})$, $t\in[0,1]$ and $g(t):=f(\gamma(t))$. Then
    \begin{equation*}
        \begin{split}
            g'(t)&=\gamma'(t)\nabla f(\gamma(t)),\\
            g''(t)&=2(D^2f(\gamma(t))\cdot\nabla f(\gamma(t)),\nabla f(\gamma(t))).
        \end{split}
    \end{equation*}
   Since $g$ is the restriction of $f$ to a line, it is concave and then
    \begin{equation*}
        0\geq g''(0)=2(D^2f(\overline{x})\cdot\nabla f(\overline{x}),\nabla f(\overline{x})).
    \end{equation*}
\end{proof}
We are now ready to prove the lower bound for the Torsion functional \eqref{FunzconTors}.

\begin{proof}[Proof of Theorem \ref{teorema:torsione}]
Let $d_\varepsilon$ be the function defined in Proposition \ref{approx}, we can consider $F_k[g(-d_\varepsilon)]$. Let us denote by $M_\varepsilon$ the maximum of $d_\varepsilon$. 
From Lemmas \ref{lemma1}-\ref{lemma2} we have 
\begin{equation}\label{BoundTor}
    T_k(\Omega)\geq \sup_{g\in \mathcal{C}^-}\frac{\left(\int_\Omega -g(d_\varepsilon)\,dx \right)^{k+1}}{\int_\Omega -g(d_\varepsilon)S_k(D^2g(d_\varepsilon))\,dx}=k\left[\int_{-M_\varepsilon}^0\frac{\mu_\varepsilon(t)^{1+\frac{1}{k}}}{\left(\int_{\{d_\varepsilon=t\}}\frac{S_k^{ij}(D^2d_\varepsilon)(d_\varepsilon)_i(d_\varepsilon)_j}{\abs{\nabla d_\varepsilon}}\,d\mathcal{H}^{n-1} \right)^\frac{1}{k}}\,dt\right]^k.
\end{equation}
We remark that, by \eqref{hksk}, point $3$ in Proposition \eqref{approx} and \eqref{quermass_con_curvature}, it follows
\begin{equation*}
    \begin{split}
        \int_{\{d_\varepsilon=t\}}\frac{S_k^{ij}(D^2d_\varepsilon)(d_\varepsilon)_i(d_\varepsilon)_j}{\abs{\nabla d_\varepsilon}}\,d\mathcal{H}^{n-1}&=\int_{\{d_\varepsilon=t\}}H_{k-1}(\{d_\varepsilon=t\})\abs{\nabla d_\varepsilon}^k \,d\mathcal{H}^{n-1}\\
        &\leq\int_{\{d_\varepsilon=t\}}H_{k-1}(\{d_\varepsilon=t\})\,d\mathcal{H}^{n-1}=nW_k(\{d_\varepsilon=t\})\\&\leq nW_k(\Omega),
    \end{split}
\end{equation*}
where the last inequality is a consequence of the monotonicity of the quermassintegral with respect to inclusion; this allows us to rewrite \eqref{BoundTor}
\begin{equation}\label{step1}
    \left(\frac{T_k(\Omega)}{k}\right)^\frac{1}{k}\geq \frac{1}{(nW_k(\Omega))^\frac{1}{k}}\int_{-M_\varepsilon}^0\mu_\varepsilon(t)^{1+\frac{1}{k}}\, dt.
\end{equation}

We now integrate by parts the right-hand side, obtaining

\begin{equation}\label{TorBound2}
    \begin{split}
        \left(\frac{T_k(\Omega)}{k}\right)^\frac{1}{k}&\geq \frac{1}{(nW_k(\Omega))^\frac{1}{k}}\int_{-M_\varepsilon}^0\mu_\varepsilon(t)^{1+\frac{1}{k}}\frac{\mu_\varepsilon'(t)}{\mu_\varepsilon'(t)}\,dt\\&=\frac{k}{2k+1}\frac{1}{(nW_k(\Omega))^\frac{1}{k}}\int_{-M_\varepsilon}^0\frac{d}{dt}\left[(\mu_\varepsilon(t))^{2+\frac{1}{k}}\right]\frac{dt}{\mu_\varepsilon'(t)}\\
        &=\frac{k}{2k+1}\frac{1}{(nW_k(\Omega))^\frac{1}{k}}\frac{\mu_\varepsilon(0)^{2+\frac{1}{k}}}{\mu_\varepsilon'(0)}+\frac{1}{(nW_k(\Omega))^\frac{1}{k}}\int_{-M_\varepsilon}^0\frac{\mu_\varepsilon^{2+\frac{1}{k}}(t)\mu_\varepsilon''(t)}{(\mu_\varepsilon'(t))^2}\,dt.
    \end{split}
\end{equation}
The last term in the previous equality is nonnegative, and this follows from the fact that  $\mu_\varepsilon''$ is nonnegative. 

Indeed, applying the Coarea Formula, we have that
\begin{equation*}
    \begin{split}
        \mu_\varepsilon'(t)&=\int_{\{d_\varepsilon=t\}}\frac{1}{\abs{\nabla d_\varepsilon}}\,d\mathcal{H}^{n-1}=\int_{\{d_\varepsilon=t\}}\frac{\nu\cdot\nu}{\abs{\nabla d_\varepsilon}}\,d\mathcal{H}^{n-1}=\int_{\{d_\varepsilon<t\}}\mathrm{div}\left(\frac{\nu}{\abs{\nabla d_\varepsilon}} \right) \,d\mathcal{H}^{n-1}\\
        &=\int_{-M_\varepsilon}^0\int_{\{d_\varepsilon=t\}}\frac{1}{\abs{\nabla d_\varepsilon}}\mathrm{div}\left(\frac{\nu}{\abs{\nabla d_\varepsilon}} \right)\,d\mathcal{H}^{n-1},
    \end{split}
\end{equation*}
and then
{
\begin{equation*}
    \mu_\varepsilon''(t)=\int_{\{d_\varepsilon=t\}}\frac{1}{\abs{\nabla d_\varepsilon}}\mathrm{div}\left(\frac{\nu}{\abs{\nabla d_\varepsilon}} \right)\,d\mathcal{H}^{n-1}=\underbrace{\int_{\{d_\varepsilon=t\}}\frac{H_1}{\abs{\nabla d_\varepsilon}^2}\,d\mathcal{H}^{n-1}}_{\geq 0}+\int_{\{d_\varepsilon=t\}}\nabla\left(\frac{1}{\abs{\nabla d_\varepsilon}}\right)\cdot\frac{\nu}{\abs{\nabla d_\varepsilon}}\,d\mathcal{H}^{n-1}.
\end{equation*}}
The first integral is positive because of the convexity of the set $\{-d_\varepsilon<t\}$, while the last term is
\begin{equation*}
    \int_{\{d_\varepsilon=t\}}\nabla\left(\frac{1}{\abs{\nabla d_\varepsilon}}\right)\cdot\frac{\nu}{\abs{\nabla d_\varepsilon}}\,d\mathcal{H}^{n-1}=\int_{\{d_\varepsilon=t\}}\frac{-(\nabla d_\varepsilon\times D^2d_\varepsilon)\cdot\nabla d_\varepsilon}{\abs{\nabla d_\varepsilon}^5} \,d\mathcal{H}^{n-1}
\end{equation*}
and it nonnegative from Lemma \ref{concInfLap}.

\begin{figure}
    \centering
    \tikzset{every picture/.style={line width=0.75pt}} 

\begin{tikzpicture}[x=1pt,y=1pt,yscale=-1,xscale=1]

\draw [color={rgb, 255:red, 74; green, 144; blue, 226 }  ,draw opacity=1 ]   (338.02,110) .. controls (274.72,139.83) and (275.52,140.02) .. (211.43,110) ;
\draw  [color={rgb, 255:red, 65; green, 117; blue, 5 }  ,draw opacity=1 ] (274.73,132.44) -- (211.43,109.6) -- (338.02,109.6) -- cycle ;
\draw    (185.58,109.9) -- (374,110) ;
\draw [shift={(376,110)}, rotate = 180.03] [color={rgb, 255:red, 0; green, 0; blue, 0 }  ][line width=0.75]    (10.93,-3.29) .. controls (6.95,-1.4) and (3.31,-0.3) .. (0,0) .. controls (3.31,0.3) and (6.95,1.4) .. (10.93,3.29)   ;
\draw    (274.73,90) -- (274.73,188) ;
\draw [shift={(274.73,190)}, rotate = 270] [color={rgb, 255:red, 0; green, 0; blue, 0 }  ][line width=0.75]    (10.93,-3.29) .. controls (6.95,-1.4) and (3.31,-0.3) .. (0,0) .. controls (3.31,0.3) and (6.95,1.4) .. (10.93,3.29)   ;
\draw  [color={rgb, 255:red, 128; green, 128; blue, 128 }  ,draw opacity=1 ] (274.73,152.48) -- (211.43,110) -- (338.02,110) -- cycle ;
\draw [color={rgb, 255:red, 245; green, 166; blue, 35 }  ,draw opacity=1 ]   (210.81,109.6) -- (338.67,109.6) ;

\draw (316,127.4) node [anchor=north west][inner sep=0.75pt]  [font=\tiny,color={rgb, 255:red, 128; green, 128; blue, 128 }  ,opacity=1 ]  {-$d$};
\draw (223,125.4) node [anchor=north west][inner sep=0.75pt]  [font=\tiny,color={rgb, 255:red, 74; green, 144; blue, 226 }  ,opacity=1 ]  {-$d_\varepsilon $};
\draw (296,113.4) node [anchor=north west][inner sep=0.75pt]  [font=\tiny,color={rgb, 255:red, 65; green, 117; blue, 5 }  ,opacity=1 ]  {-$c_{\varepsilon }$};
\draw (263,182.4) node [anchor=north west][inner sep=0.75pt]  [font=\tiny]  {$y$};
\draw (367,113.4) node [anchor=north west][inner sep=0.75pt]  [font=\tiny]  {$x$};
\draw (257,102.4) node [anchor=north west][inner sep=0.75pt]  [font=\tiny,color={rgb, 255:red, 245; green, 166; blue, 35 }  ,opacity=1 ]  {$\Omega $};

\end{tikzpicture}
    \caption{}
    \label{Figura 1}
\end{figure}

To conclude the proof, we claim that, as $\varepsilon$ goes to $0$
\begin{equation*}
    \mu_\varepsilon(0)\xrightarrow{\varepsilon\rightarrow0}\abs{\Omega}, \quad \mu_\varepsilon'(0)\xrightarrow{\varepsilon\rightarrow 0}P(\Omega).
\end{equation*}
The convergence of $\mu_\varepsilon(0)$ follows from the uniform convergence of $d_\varepsilon$ to $d(\cdot, \partial\Omega)$. To prove that $\mu_\varepsilon'(0)\xrightarrow{\varepsilon\rightarrow 0}P(\Omega)$, we observe that the function $d_\varepsilon$ is concave in $\Omega$, hence there exists a function $C_\varepsilon$  (see Figure \ref{Figura 1}) such that

\begin{itemize}
    \item $C_\varepsilon\le d_\varepsilon$;
    \item The graph of $C_\varepsilon$ is a cone of basis $\Omega$ and height $M_\varepsilon$;
    \item $\mu_{C_\varepsilon}(t)=\abs{\set{-C_\varepsilon<t}}\le \mu_\varepsilon(t)\le \mu(t)=\abs{\set{-d(\cdot,\partial\Omega)<t}}$;
    \item $\set{-C_\varepsilon<t}=\left(1+\frac{t}{M_\varepsilon}\right)\Omega$.
\end{itemize}
 Now, we can compute the derivative
\begin{equation*}
    \abs{\Omega}\frac{(1+ h/M_\varepsilon)^n-1}{h}=\frac{\mu_{C_\varepsilon}(h)-\mu_{C_\varepsilon}(0)}{h}\leq \frac{\mu_{\varepsilon}(h)-\mu_{\varepsilon}(0)}{h}\leq \frac{\mu(h)-\mu(0)}{h},
\end{equation*}
and we obtain
$$\frac{P(\Omega) r(\Omega)}{M_\varepsilon}\le\frac{n\abs{\Omega}}{M_\varepsilon}\le \mu'_\varepsilon(0)\le P(\Omega).$$
Since $M_\varepsilon\rightarrow r(\Omega)$ as $\varepsilon\rightarrow0^+$, we have that $\mu'_\varepsilon(0)\rightarrow P(\Omega)$.
Then from \eqref{TorBound2} we find
\begin{equation*}
    T_k(\Omega)\geq \frac{k^{k+1}}{n(2k+1)^k}\frac{\abs{\Omega}^{2k+1}}{W_k(\Omega)P(\Omega)^k}\implies \frac{T_k(\Omega)W_k(\Omega)P(\Omega)^k}{\abs{\Omega}^{2k+1}}\geq \frac{k^{k+1}}{n(2k+1)^k}.
\end{equation*}
\end{proof}

Let us observe that in the case $k=1$ we recover exactly the result by P\'olya \eqref{pol}. One might wonder if an upper bound as in \eqref{makai_intro} can hold.  In the next example, we show that, in the case of $n=k=2$, the P\'olya functional for the Hessian equation is not bounded from above. 

Indeed, the derivation of an upper bound might be strongly influenced by the choice of 
$k$ in relation to the dimension, and this dependence is not unexpected, as it is already manifest in \cite{Gavitone2007}.

\begin{esempio}\label{esempio}
    Let us consider $k=n=2$. In this case the functional \eqref{FunzconTors} is
    \begin{equation*}
        \frac{T_2(\Omega)P(\Omega)^2W_2(\Omega)}{\abs{\Omega}^5},
    \end{equation*}
    and we consider $ \mathcal{E}_\varepsilon$ the sequence of ellipses
    \begin{equation*}
        \mathcal{E}_\varepsilon:\, \left\{\ (x,y)\in \R^2 \frac{x^2}{\varepsilon^2}+\varepsilon^2y^2\leq 1\right\}.
    \end{equation*}
    We recall that the $n$-Torsion (the one related to the Monge-Ampère operator) of an ellipse is given in \cite{newisop_BNT}
    \begin{equation*}
        T(\mathcal{E}_\varepsilon)=\frac{\abs{ \mathcal{E}_\varepsilon}^{n+2}}{(n+2)^n\omega_n^2}.
    \end{equation*}
    Then, in this case, we have
    \begin{equation*}
        W_2(\mathcal{E}_\varepsilon)=\omega_2=\pi,\, \,\abs{\mathcal{E}}=\pi,\, \, T(\mathcal{E}_\varepsilon)=\frac{\pi^2}{16},\, \, P(\mathcal{E}_\varepsilon)\approx 2\pi\sqrt{\frac{\varepsilon^4+1}{2\varepsilon^2}}.
    \end{equation*}
    Then, as $\varepsilon\rightarrow0^+$, the shape functional diverges.
\end{esempio}

Once we have proven the lower bound \eqref{lower:torsion}, one can ask if it is sharp, at least asymptotically, as in the case $k=1$. Of course, in our case it can be more difficult to compute the $k$-torsional rigidity of a set.

In \cite{AGS} the authors add a reminder term in inequality \eqref{pol} which takes into account the thickness of the domain. In particular, this term vanishes as the domain loses one dimension, as in the case of the thinning cylinders that, asymptotically, attain the equality in \eqref{pol}. Following their strategy, we are able to do the same with the estimate proved in Theorem \ref{teorema:torsione}, and this will be the content of Theorem \ref{lower:ags}. Before doing this, we need to recall some preliminary. 
In the next proposition, we recall two classical inequalities in convex analysis, which can be found in Chapter 10, Section 44 of \cite{BonnesenFenchel}, formulae $(6)$ and $(9)$. 
\begin{prop}\label{BonnesenFenchel}
    Let $\Omega$ be a bounded, open convex set of $\R^n$. Then, the following inequalities hold true
    \begin{equation*}
        \begin{split}
            (i)&\quad P(\Omega)\leq n\omega_n\left(\frac{D(\Omega)}{2}\right)^{n-1},\\
            (ii)&\quad r(\Omega)\geq a(n)w_\Omega,
        \end{split}
    \end{equation*}
    where the constant is
    \begin{equation*}
        a(n)=\begin{cases}
            \frac{\sqrt{n+2}}{2n+2} & \text{ if n is even}\\
            \frac{1}{2\sqrt{n}} & \text{ if n is odd.}
        \end{cases}
    \end{equation*}
\end{prop}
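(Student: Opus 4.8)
The two inequalities are classical and admit rather different proofs, so I would establish them separately.

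For $(i)$ the plan is to combine Cauchy's projection formula for the perimeter of a convex body with the isodiametric inequality in dimension $n-1$. Writing $\pi_u(\Omega)$ for the orthogonal projection of $\Omega$ onto the hyperplane $u^{\perp}$, Cauchy's formula reads
\[
P(\Omega)=\frac{1}{\omega_{n-1}}\int_{\mathbb{S}^{n-1}}\mathcal{H}^{n-1}\big(\pi_u(\Omega)\big)\,d\mathcal{H}^{n-1}(u).
\]
Orthogonal projection does not increase the diameter, so the diameter of $\pi_u(\Omega)$ is at most $D(\Omega)$ for every $u\in\mathbb{S}^{n-1}$, and the isodiametric inequality in $\R^{n-1}$ gives $\mathcal{H}^{n-1}(\pi_u(\Omega))\le \omega_{n-1}\left(D(\Omega)/2\right)^{n-1}$. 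Integrating over $\mathbb{S}^{n-1}$, whose $\mathcal{H}^{n-1}$-measure equals $n\omega_n$, yields exactly $(i)$; the only non-elementary ingredient is Cauchy's formula.

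For $(ii)$, which is Steinhagen's inequality, I would argue through the inball. Let $B_r$ be an inball of $\Omega$, centered at the origin, and let $x_1,\dots,x_m$ be the points of $\partial B_r\cap\partial\Omega$, with outward unit normals $u_i=x_i/r$; then $h(\Omega,u_i)=r$ and $\Omega\subseteq P:=\bigcap_i\{y : y\cdot u_i\le r\}$. Maximality of the inradius forces the origin to lie in the convex hull of $\{x_1,\dots,x_m\}$ (otherwise a separating direction would allow one to inscribe a larger ball), so $\sum_i\lambda_i u_i=0$ for some $\lambda_i>0$ with $\sum_i\lambda_i=1$, and by Carathéodory we may assume $m\le n+1$. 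A soft estimate is then immediate: from $u_j=-\lambda_j^{-1}\sum_{i\ne j}\lambda_i u_i$ and $y\cdot u_i\le r$ on $\Omega$ one gets $h(\Omega,-u_j)\le r(1-\lambda_j)/\lambda_j$, whence $w(\Omega,u_j)\le r/\lambda_j$ and $w_\Omega\le r/\max_j\lambda_j\le(n+1)r$. To reach the sharp constant one uses $w_\Omega\le w_P$ (monotonicity of the width under inclusion) and maximizes the scale-invariant ratio (minimal width)$/$(inradius) over all polytopes $P$ of this form; the extremizer is the regular $n$-simplex. For the regular $n$-simplex of inradius $r$ (hence circumradius $nr$) the minimal width is attained orthogonally to a pair of complementary faces of dimensions $s$ and $n-1-s$ with $s=\lfloor(n-1)/2\rfloor$, and a short computation gives for it the value $\dfrac{n(n+1)\,r}{\sqrt{n(s+1)(n-s)}}$, which equals $2\sqrt{n}\,r$ when $n$ is odd and $\dfrac{2(n+1)}{\sqrt{n+2}}\,r$ when $n$ is even, i.e.\ exactly $r/a(n)$.

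In $(ii)$ the reductions $\Omega\subseteq P$, the characterization $0\in\operatorname{conv}\{x_i\}$ of the inball, and the final computation for the regular simplex are routine. The main obstacle is proving that, among all admissible polytopes $P$ of a given inradius, the regular simplex maximizes the minimal width, together with the bookkeeping of the parity-dependent value of that maximum; this is the substantive step, and it is precisely where the constant $a(n)$ comes from.
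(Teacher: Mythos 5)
The paper never proves this proposition: it is recalled as a classical result with a pointer to Bonnesen--Fenchel (Chapter 10, Section 44, formulae (6) and (9)), so there is no in-paper argument to compare with, and your attempt has to stand on its own. Part (i) does: Cauchy's projection formula with the normalization $P(\Omega)=\omega_{n-1}^{-1}\int_{\mathbb{S}^{n-1}}\mathcal{H}^{n-1}(\pi_u(\Omega))\,d\mathcal{H}^{n-1}(u)$, the fact that orthogonal projection does not increase the diameter, and the isodiametric inequality in $\R^{n-1}$ combine, using $\mathcal{H}^{n-1}(\mathbb{S}^{n-1})=n\omega_n$, to give exactly $P(\Omega)\le n\omega_n\left(D(\Omega)/2\right)^{n-1}$; this is a complete and correct proof of (i).

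Part (ii) has a genuine gap. Your reduction is sound: the contact points of an inball give $h(\Omega,u_i)=r$, hence $\Omega\subseteq P=\bigcap_i\{y:y\cdot u_i\le r\}$, the maximality of the inradius forces $0\in\mathrm{conv}\{u_1,\dots,u_m\}$, Carathéodory lets you take $m\le n+1$, and $w_\Omega\le w_P$ by monotonicity; your evaluation of the regular simplex is also correct (the width-to-inradius ratio $n(n+1)/\sqrt{n(s+1)(n-s)}$ at $s=\lfloor(n-1)/2\rfloor$ indeed equals $2\sqrt{n}$ for $n$ odd and $2(n+1)/\sqrt{n+2}$ for $n$ even, i.e. $1/a(n)$). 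But the step you yourself label as the substantive one --- that among all admissible polytopes $P$ of fixed inradius the regular simplex maximizes the minimal width --- is asserted, not proved, and it is essentially the whole content of Steinhagen's inequality; the only bound you actually derive, $w_\Omega\le (n+1)r$, is strictly weaker than the claim. To close the gap one needs, for instance, the dual description $h_P(v)=r\min\{\sum_i t_i:\ t_i\ge 0,\ \sum_i t_i u_i=v\}$ and then the nontrivial lemma that for unit vectors $u_1,\dots,u_{n+1}$ with $0$ in their convex hull there is a direction $v$ for which $v$ and $-v$ admit nonnegative representations of total coefficient mass at most $2\sqrt{n}$ (respectively $2(n+1)/\sqrt{n+2}$); the parity-dependent constant $a(n)$ is produced precisely there, so without that argument the sharp inequality (ii) is not established.
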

\begin{oss}
        In the previous proposition some rigidity results hold true. More in detail, if $n=2$, equality occurs in $(i)$ if and only if $\Omega$ is a set with constant width function, and equality occurs in $(ii)$ if and only if $\Omega$ is an equilateral triangle. For $n\geq3$, the equality in $(i)$ implies that $\Omega$ is a ball, while the equality in $(ii)$ is achieved, for istance, by a regular simplex.
\end{oss}
We report without proof the following lemma that can be found in \cite[Lemma 2.4]{AGS}.
\begin{lemma}\label{AmatoTonto}
    Let $\Omega$ be a bounded, open, and convex set of $\R^n$. Then
    \begin{equation*}
        P(t)\leq P(\Omega)-c_n\frac{\abs{\Omega}-\mu(t)}{P(\Omega)^\frac{1}{n-1}},
    \end{equation*}
    for all $t\in[0,r(\Omega)]$, where the constant is given by
    \begin{equation}\label{c_n}
        c_n=\frac{n-1}{n^\frac{n-2}{n-1}}\omega_n^\frac{1}{n-1}.
    \end{equation}
\end{lemma}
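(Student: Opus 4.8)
The plan is to pass to the inner parallel sets $\Omega^t=\{x\in\Omega : d(x,\partial\Omega)>t\}$, show that $p(t):=P(\Omega^t)$ decays, near $t=0$, at least at the universal rate realized by shrinking balls, and then integrate that rate. Note first that $P(t)=p(t)$, that $f(t):=p(t)^{1/(n-1)}$ is concave and nonincreasing on $[0,r(\Omega)]$ by the preliminaries (Brunn--Minkowski together with concavity of the distance), and that the coarea formula with $\abs{\nabla d}=1$ a.e.\ gives
\[
\abs{\Omega}-\mu(t)=\abs{\Omega\setminus\Omega^t}=\int_0^t p(s)\,ds .
\]
So the target is a lower bound for $P(\Omega)-p(t)$ in terms of $\int_0^t p(s)\,ds$.

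\emph{Step 1 (the slope at the boundary).} I would first bound $f'(0^+)$. Since $\partial\Omega\in C^2$, for small $t$ the inward normal map $x\mapsto x-t\,\nu(x)$ parametrizes $\partial\Omega^t$ with Jacobian $\prod_i(1-t\kappa_i)=1-t(n-1)H_1+O(t^2)$, so by \eqref{quermass_con_curvature},
\[
p(t)=P(\Omega)-t\,n(n-1)W_2(\Omega)+O(t^2)\qquad\Longrightarrow\qquad f'(0^+)=-\,n\,W_2(\Omega)\,P(\Omega)^{\frac{2-n}{n-1}} .
\]
Feeding in the Aleksandrov--Fenchel inequality \eqref{Aleksandrov_Fenchel_inequalities} with $i=1$, $j=2$ and $W_1=P(\Omega)/n$ (for $n=2$ one simply uses $W_2=\omega_2$), namely $W_2(\Omega)\ge\omega_n^{\frac{1}{n-1}}(P(\Omega)/n)^{\frac{n-2}{n-1}}$, collapses this to $f'(0^+)\le-(n\omega_n)^{\frac{1}{n-1}}$; concavity of $f$ then propagates the bound to $f'(t)\le-(n\omega_n)^{\frac{1}{n-1}}$ for a.e.\ $t\in(0,r(\Omega))$. (For a general convex $\Omega$, not assumed $C^2$, one replaces the first--variation computation by the inclusion $\Omega^t+tB\subseteq\overline\Omega$ and the Steiner--type formula \eqref{Steinerquermass} for $j=1$, which give $P(\Omega)-P(\Omega^t)\ge n(n-1)W_2(\Omega^t)\,t$; dividing by $t$, letting $t\to0^+$, and applying Aleksandrov--Fenchel to $\Omega^t$ recovers the same differential inequality.)

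\emph{Step 2 (integration).} The rest is one--variable calculus. By absolute continuity of $f$,
\[
P(\Omega)-P(t)=f(0)^{n-1}-f(t)^{n-1}=(n-1)\int_0^t f(s)^{n-2}\bigl(-f'(s)\bigr)\,ds\ge (n-1)(n\omega_n)^{\frac{1}{n-1}}\int_0^t f(s)^{n-2}\,ds ,
\]
and since $f$ is nonincreasing, $f(s)\le f(0)=P(\Omega)^{\frac1{n-1}}$, whence $f(s)^{n-2}\ge f(s)^{n-1}/P(\Omega)^{\frac1{n-1}}=p(s)/P(\Omega)^{\frac1{n-1}}$; plugging this in and invoking the coarea identity above yields
\[
P(\Omega)-P(t)\ge (n-1)(n\omega_n)^{\frac{1}{n-1}}\,\frac{\abs{\Omega}-\mu(t)}{P(\Omega)^{\frac1{n-1}}} .
\]
Since $(n-1)(n\omega_n)^{\frac1{n-1}}=n\,c_n\ge c_n$, this gives the stated inequality, in fact with the larger (and, one checks, optimal) constant $(n-1)(n\omega_n)^{\frac1{n-1}}$.

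The main obstacle is Step~1: pinning down the sharp initial slope $f'(0^+)\le-(n\omega_n)^{1/(n-1)}$, which is precisely the value attained by balls and is where both the $C^2$ first--variation computation and the Aleksandrov--Fenchel inequality enter. Once that is secured, concavity of $f$ propagates it to all $t\in(0,r(\Omega))$ and the remaining estimate is elementary.
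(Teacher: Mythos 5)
Your proposal is correct, but note that the paper itself does not prove this lemma: it is quoted verbatim from \cite[Lemma 2.4]{AGS} ("We report without proof the following lemma\dots"), so there is no in-paper argument to compare with. What you give is a self-contained proof built exactly from the toolkit the paper sets up in Section 2 (concavity of $s\mapsto P(\Omega^s)^{1/(n-1)}$ via Brunn--Minkowski, the Steiner-type formula \eqref{Steinerquermass}, Aleksandrov--Fenchel \eqref{Aleksandrov_Fenchel_inequalities}, coarea with $\abs{\nabla d}=1$), and the computations check out: $f'(0^+)=-nW_2(\Omega)P(\Omega)^{\frac{2-n}{n-1}}\le-(n\omega_n)^{\frac1{n-1}}$, propagation by concavity, and the elementary integration in Step 2 are all sound, including the $n=2$ case where $W_2=\omega_2$ replaces Aleksandrov--Fenchel. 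Your argument in fact yields the constant $(n-1)(n\omega_n)^{\frac1{n-1}}=n\,c_n$, which is stronger than the stated bound and asymptotically attained by balls as $t\to0^+$, so the lemma follows a fortiori. Two small points worth tightening if this were written out in full: the lemma is stated for arbitrary bounded open convex $\Omega$, so the parenthetical route via $\Omega^{s}+hB\subseteq\overline{\Omega^{s-h}}$ (applied between two levels, together with continuity of $W_2$ under Hausdorff convergence of the inner parallel bodies) should be promoted from a remark to the actual Step 1, since the first-variation expansion and the parametrization $x\mapsto x-t\nu(x)$ genuinely use $C^2$ regularity and small $t$; and the endpoint $t=r(\Omega)$, where $\Omega^t$ may degenerate and $f'$ need not be bounded, is most cleanly handled by proving the inequality for $t<r(\Omega)$ and passing to the limit using continuity of $\mu$ and monotonicity (or lower semicontinuity) of $P(t)$. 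Neither point is a gap in substance.
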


We are now in position to prove Theorem \ref{lower:ags}.

\begin{proof}[Proof of Theorem \ref{lower:ags}]
    Let us start by sending $\varepsilon$ to $0$ in \eqref{step1}, 
    \begin{equation}\label{inizio1}
    \begin{split}
     \left(\frac{T_k(\Omega)}{k}\right)^\frac{1}{k}&\geq \frac{1}{(nW_k(\Omega))^\frac{1}{k}}\int_{-r(\Omega)}^0\mu(t)^{1+\frac{1}{k}}\, dt\\
        &=\frac{1}{(nW_k(\Omega))^\frac{1}{k}}\int_{-r(\Omega)}^{\overline{t}}\mu(t)^{1+\frac{1}{k}}\, dt+\frac{1}{(nW_k(\Omega))^\frac{1}{k}}\int_{\overline{t}}^0\mu(t)^{1+\frac{1}{k}}\, dt,
    \end{split}
    \end{equation}
    where $\overline{t}\in(-r(\Omega),0)$ satisfies $\mu(\overline{t})=\Tilde{c}\abs{\Omega}$, with $\tilde c\in(0,1)$ to be choosen later. Then, from Lemma \ref{AmatoTonto}
    \begin{equation*}
        \begin{split}
            \left(\frac{T_k(\Omega)}{k}\right)^\frac{1}{k}&\geq \frac{1}{P(\Omega)(nW_k(\Omega))^\frac{1}{k}}\int_{\overline{t}}^{0}\mu(t)^{1+\frac{1}{k}}\mu'(t)\,dt+\frac{1}{(nW_k(\Omega))^\frac{1}{k}}\int_{-r(\Omega)}^{\overline{t}}\mu(t)^{1+\frac{1}{k}}\frac{\mu'(t)}{P(t)}\,dt\\
            &\geq \frac{k}{2k+1}\frac{1}{P(\Omega)(nW_k(\Omega))^\frac{1}{k}}\int_{\overline{t}}^{0}\frac{d}{dt}\left(\mu(t)^{2+\frac{1}{k}}\right)\,dt\\
            &+\frac{k}{(2k+1)(nW_k(\Omega))^\frac{1}{k}\left(P(\Omega)-c_n\frac{\abs{\Omega}-\mu(\overline{t})}{P(\Omega)^\frac{1}{n-1}} \right)}\int_{-r(\Omega)}^{\overline{t}}\frac{d}{dt}\left(\mu(t)^{2+\frac{1}{k}}\right)\,dt\\
            &=\frac{k}{2k+1}\left\{\frac{\abs{\Omega}^{2+\frac{1}{k}}-\mu(\overline{t})^{2+\frac{1}{k}}}{P(\Omega)(nW_k(\Omega))^\frac{1}{k}}+\frac{\mu(\overline{t})^{2+\frac{1}{k}}}{(nW_k(\Omega))^\frac{1}{k}\left(P(\Omega)-c_n\frac{\abs{\Omega}-\mu(\overline{t})}{P(\Omega)^\frac{1}{n-1}} \right)} \right\}.
        \end{split}
    \end{equation*}
    For the second term, since 
    \begin{equation*}
        \frac{1}{1-a}\geq 1+a\quad\forall\,a\in[0,1),
    \end{equation*}
    we have that
    \begin{equation*}
        \begin{split}
            \frac{\mu(\overline{t})^{2+\frac{1}{k}}}{(nW_k(\Omega))^\frac{1}{k}\left(P(\Omega)-c_n\frac{\abs{\Omega}-\mu(\overline{t})}{P(\Omega)^\frac{1}{n-1}} \right)}&=\frac{\mu(\overline{t})^{2+\frac{1}{k}}}{(nW_k(\Omega))^\frac{1}{k}P(\Omega)\left(1-c_n\frac{\abs{\Omega}-\mu(\overline{t})}{P(\Omega)^\frac{n}{n-1}} \right)}\\
            &\geq \frac{\mu(\overline{t})^{2+\frac{1}{k}}}{P(\Omega)(nW_k(\Omega))^\frac{1}{k}}\left(1+c_n\frac{\abs{\Omega}-\mu(\overline{t})}{P(\Omega)^\frac{n}{n-1}} \right).
        \end{split}
    \end{equation*}
    Then, recalling that $\mu(\overline{t})=\tilde c\abs{\Omega}$,
    \begin{equation*}
        \begin{split}
            \left(\frac{T_k(\Omega)}{k}\right)^\frac{1}{k}&\geq \frac{k}{2k+1}\left\{\frac{\abs{\Omega}^{2+\frac{1}{k}}-\mu(\overline{t})^{2+\frac{1}{k}}}{P(\Omega)(nW_k(\Omega))^\frac{1}{k}}+\frac{\mu(\overline{t})^{2+\frac{1}{k}}}{P(\Omega)(nW_k(\Omega))^\frac{1}{k}}\left(1+c_n\frac{\abs{\Omega}-\mu(\overline{t})}{P(\Omega)^\frac{n}{n-1}} \right) \right\}\\
            &=\frac{k}{2k+1}\left\{\frac{\abs{\Omega}^{2+\frac{1}{k}}}{P(\Omega)(nW_k(\Omega))^\frac{1}{k}}+c_n\frac{\tilde c^{2+\frac{1}{k}}(1-\tilde c)\abs{\Omega}^{3+\frac{1}{k}}}{P(\Omega)^{1+\frac{n}{n-1}}(nW_k(\Omega))^\frac{1}{k}} \right\}.
        \end{split}
    \end{equation*}
    Now we want to maximize $(1-\tilde c)\tilde c^{2+\frac{1}{k}}$, that gives us $\tilde c=\frac{2k+1}{3k+1}$. Hence we have
    \begin{equation*}
        \left(\frac{T_k(\Omega)}{k}\right)^\frac{1}{k}\geq \frac{k}{2k+1}\left\{\frac{\abs{\Omega}^{2+\frac{1}{k}}}{P(\Omega)(nW_k(\Omega))^\frac{1}{k}}+c_n\frac{k(2k+1)^{2+\frac{1}{k}}}{(3k+1)^{3+\frac{1}{k}}}\frac{\abs{\Omega}^{3+\frac{1}{k}}}{P(\Omega)^{1+\frac{n}{n-1}}(nW_k(\Omega))^\frac{1}{k}} \right\}.
    \end{equation*}
    Now, recalling that $(a+b)^k\geq a^k+ka^{k-1}b$, where $a,\,b\geq0$, from the previous inequality follows that
    \begin{equation*}
        \frac{T_k(\Omega)}{k}\geq \left(\frac{k}{2k+1}\right)^k\left\{\frac{\abs{\Omega}^{2k+1}}{P(\Omega)^knW_k(\Omega)}+\frac{c_nk^2(2k+1)^{2+\frac{1}{k}}}{(3k+1)^{3+\frac{1}{k}}}\frac{\abs{\Omega}^{2k+2}}{P(\Omega)^{k+\frac{n}{n-1}}nW_k(\Omega)} \right\}.
    \end{equation*}
    Applying the following inequality
    \begin{equation*}
        \frac{\abs{\Omega}}{P(\Omega)}\geq \frac{r(\Omega)}{n},
    \end{equation*}
    from Proposition \ref{BonnesenFenchel}, we have
    \begin{equation*}
        \begin{split}
            \frac{T_k(\Omega)W_k(\Omega)P(\Omega)^k}{\abs{\Omega}^{2k+1}}-\frac{k^{k+1}}{n(2k+1)^k}&\geq \frac{k^{k+3}c_n(2k+1)^{2+\frac{1}{k}}}{n(2k+1)^k(3k+1)^{3+\frac{1}{k}}}\frac{\abs{\Omega}}{P(\Omega)^\frac{n}{n-1}}\\&\geq\frac{k^{k+3}c_n(2k+1)^{2+\frac{1}{k}}}{n^2(2k+1)^k(3k+1)^{3+\frac{1}{k}}}\frac{r(\Omega)}{P(\Omega)^\frac{1}{n-1}}\\
            &\geq \frac{k^{k+3}c_n(2k+1)^{2+\frac{1}{k}}}{n^2(2k+1)^k(3k+1)^{3+\frac{1}{k}}}\frac{2}{(n\omega_n)^\frac{1}{n-1}}a(n)\alpha(\Omega).
        \end{split}
    \end{equation*}
    Then, from the definition of $c_n$ \eqref{c_n} we have the thesis.
\end{proof}

Theorem \ref{lower:ags} ensures that a minimizing sequence for inequality \eqref{lower:torsion} must be a sequence of thinning domains. To obtain more information about the shape of this sequence we need the following Lemma proved in \cite{AGS}.

\begin{lemma}[Lemma 3.1, \cite{AGS}]\label{LemmaAGS}
Let $\Omega$ be a non-empty, bounded, open, and convex set of $\R^n$ and let 
\begin{equation*}
    \beta(\Omega)=\frac{P(\Omega)r(\Omega)}{\abs{\Omega}}-1
        .
\end{equation*}
 Then
    \begin{equation}
        \mu\left(-\frac{\abs{\Omega}}{P(\Omega)}\right)\ge  \frac{\abs{\Omega}}{6n} \beta(\Omega).
    \end{equation}  
\end{lemma}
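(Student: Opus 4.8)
The plan is to run everything through the profile function $\mu$ and the structural facts from the preliminaries: $\mu$ is increasing and convex on $[-r(\Omega),0]$ with $\mu(-r(\Omega))=0$, $\mu(0)=\abs{\Omega}$, $\mu'=P(\cdot)$ a.e.\ and $\mu'(0^-)=P(\Omega)$, and $t\mapsto P(t)^{1/(n-1)}$ is concave on $[-r(\Omega),0]$; I would also use Lemma \ref{AmatoTonto} and the isoperimetric inequality. Set $\tau:=\abs{\Omega}/P(\Omega)$. Since $\mu'(t)=P(t)\le P(\Omega)$ and $\int_{-r(\Omega)}^{0}P(t)\,dt=\abs{\Omega}$, one has $\tau\le r(\Omega)$, so $\Omega^{\tau}$ is a genuine (possibly lower‑dimensional) convex set and $\mu(-\tau)=\abs{\Omega^{\tau}}$ is exactly the quantity to estimate.

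The first step is to record two identities, both consequences of $\tau P(\Omega)=\abs{\Omega}$: integrating $\mu'=P$ gives $\mu(-\tau)=\int_{-r(\Omega)}^{-\tau}P(t)\,dt$, and since $\int_{-\tau}^{0}P(t)\,dt=\abs{\Omega}-\mu(-\tau)$,
\begin{equation*}
\mu(-\tau)=\int_{-\tau}^{0}\bigl(P(\Omega)-P(t)\bigr)\,dt .
\end{equation*}
Then I would feed Lemma \ref{AmatoTonto} into this second identity in the form $P(\Omega)-P(t)\ge c_n\,(\abs{\Omega}-\mu(t))\,P(\Omega)^{-1/(n-1)}$, and bound $\abs{\Omega}-\mu(t)=\int_{t}^{0}\mu'(s)\,ds\ge(-t)\,\mu'(-\tau)=(-t)\,P(\Omega^{\tau})$ using that $\mu'=P(\cdot)$ is increasing. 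Integrating over $t\in[-\tau,0]$ yields the working estimate
\begin{equation*}
\mu(-\tau)\ \ge\ \frac{c_n\,\tau^{2}\,P(\Omega^{\tau})}{2\,P(\Omega)^{1/(n-1)}} .
\end{equation*}

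It remains to bound $P(\Omega^{\tau})$ from below, and this is the crux, because the behaviour differs for "round" and "thin" $\Omega$ (the perimeter of inner parallel sets stays of order $P(\Omega)$ for thin sets but can collapse for round ones). I would split on the size of $\beta(\Omega)$, equivalently of $\tau/r(\Omega)=1/(1+\beta)$. If $\beta(\Omega)\ge\beta_0$ for a suitable dimensional constant $\beta_0$: use the concavity of $t\mapsto P(t)^{1/(n-1)}$ together with $P(\Omega^{r})\ge 0$ to get the chord bound $P(\Omega^{\tau})^{1/(n-1)}\ge P(\Omega)^{1/(n-1)}\bigl(1-\tfrac{\tau}{r(\Omega)}\bigr)=P(\Omega)^{1/(n-1)}\tfrac{\beta}{1+\beta}$, hence $P(\Omega^{\tau})\ge P(\Omega)\bigl(\tfrac{\beta}{1+\beta}\bigr)^{n-1}$; substituting into the working estimate, writing $\tau^{2}=\abs{\Omega}^{2}/P(\Omega)^{2}$, and using $P(\Omega)^{n/(n-1)}\ge n^{n/(n-1)}\omega_n^{1/(n-1)}\abs{\Omega}$ with the value of $c_n$, one obtains a bound of the shape $\mu(-\tau)\ge\gamma(n)\,\abs{\Omega}\bigl(\tfrac{\beta}{1+\beta}\bigr)^{n}$, which for $\beta\ge\beta_0$ dominates $\tfrac{\abs{\Omega}\beta}{6n}$. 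If $\beta(\Omega)<\beta_0$ (thin regime): $\tau$ is close to $r(\Omega)$, and I would show $P(\Omega^{\tau})$ stays comparable to $P(\Omega)$ by controlling $P(\Omega)-P(\Omega^{\tau})=\int_{-\tau}^{0}\bigl(-P'(t)\bigr)\,dt$ through the concavity of $P^{1/(n-1)}$ (or, alternatively, by applying Lemma \ref{AmatoTonto} at $t=-\tau$ with $\mu(-\tau)\le\abs{\Omega}$), getting $P(\Omega^{\tau})\ge(1-C\beta)P(\Omega)$; plugging this into the working estimate, together with $\tau\ge r(\Omega)/n$ (from $\abs{\Omega}\ge r(\Omega)P(\Omega)/n$) and the elementary bounds of Proposition \ref{BonnesenFenchel}, produces a linear lower bound $\mu(-\tau)\ge c(n)\,\beta\,\abs{\Omega}$.

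The main obstacle is precisely this last bookkeeping: producing a lower bound on $P(\Omega^{\tau})$ that simultaneously captures the correct linear‑in‑$\beta$ order of $\mu(-\tau)$ in the thin regime and is compatible, across the threshold $\beta_0$, with the constant $\tfrac{1}{6n}$ — the round regime being comparatively soft. I would fix $\beta_0$ last, as the value at which the two one‑sided estimates meet, and then verify $\min\{\gamma(n)\bigl(\tfrac{\beta_0}{1+\beta_0}\bigr)^{n}/\beta_0,\;c(n)\}\ge\tfrac{1}{6n}$; if the crude constants do not close, they can be improved by replacing the chord bound $\abs{\Omega}-\mu(t)\ge(-t)P(\Omega^{\tau})$ by a two‑point convexity estimate for $\mu$ and by keeping the higher Steiner terms $\tfrac{n(n-1)}{2}W_2(\Omega^{\tau})\tau^{2}$ discarded above.
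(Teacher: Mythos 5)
The paper does not prove this lemma --- it is imported verbatim from \cite{AGS} --- so there is no internal proof to compare against; I am assessing your argument on its own terms, and it has a quantitative gap that no choice of the threshold $\beta_0$ can repair. Your reduction to $\mu(-\tau)=\int_{-\tau}^{0}\bigl(P(\Omega)-P(t)\bigr)\,dt$ with $\tau=\abs{\Omega}/P(\Omega)$ is correct, but the working estimate $\mu(-\tau)\ge c_n\,\tau^{2}P(\Omega^{\tau})/\bigl(2P(\Omega)^{1/(n-1)}\bigr)$ obtained from Lemma \ref{AmatoTonto} is too lossy precisely where $\beta$ is large. Test it on the ball $B_R$: there $\beta=n-1$ (the maximal value, since $\abs{\Omega}\ge r(\Omega)P(\Omega)/n$ forces $\beta\le n-1$, a bound you never invoke but need), the target is $\tfrac{n-1}{6n}\abs{\Omega}$, and plugging the \emph{exact} value $P(\Omega^{\tau})=P(\Omega)(1-1/n)^{n-1}$ into your working estimate yields only $\tfrac{(n-1)(1-1/n)^{n-1}}{2n^{2}}\abs{\Omega}$; the ratio to the target is $3(1-1/n)^{n-1}/n<1$ for every $n\ge 2$. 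So the large-$\beta$ branch fails for the ball no matter how well you bound $P(\Omega^{\tau})$ from below --- the loss is structural (Lemma \ref{AmatoTonto} undershoots by a factor of order $n$ on round domains), not a matter of ``crude constants'' fixable by keeping higher Steiner terms. Relatedly, your case logic is inverted: since $\gamma(n)\sim 1/(2n)$ and $\beta^{n-2}/(1+\beta)^{n-1}$ does not exceed $\gamma(n)^{-1}/(6n)$ on $[\beta_0,n-1]$, the chord-bound branch does not ``dominate $\abs{\Omega}\beta/(6n)$ for $\beta\ge\beta_0$''; if anything it is the small-$\beta$ side that the working estimate can handle.

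The thin-regime step is also unsubstantiated as written. Applying Lemma \ref{AmatoTonto} at $t=-\tau$ gives an \emph{upper} bound on $P(\Omega^{\tau})$, so it cannot produce $P(\Omega^{\tau})\ge(1-C\beta)P(\Omega)$; and the concavity of $t\mapsto P(t)^{1/(n-1)}$ only gives the chord bound $P(\Omega^{\tau})\ge P(\Omega)\bigl(\beta/(1+\beta)\bigr)^{n-1}$, which degenerates (rather than tends to $P(\Omega)$) as $\beta\to 0$. A workable route must bring in an ingredient that is sharp on round bodies, e.g.\ the inclusion $\Omega^{s}\supseteq(1-s/r(\Omega))\,\Omega+\tfrac{s}{r(\Omega)}x_{0}$ ($x_0$ the incenter), which gives directly $\mu(-\tau)\ge\abs{\Omega}\bigl(\beta/(1+\beta)\bigr)^{n}$ and, combined with $\beta\le n-1$, settles the large-$\beta$ regime with room to spare; the small-$\beta$ regime then needs a separate argument, but it cannot be the one you sketched.
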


We are now ready to prove our last main result. 
\begin{proof}[Proof of Theorem \ref{teorema:ags}]
    From \eqref{step1}, sending $\varepsilon \to 0$, we have
    \begin{equation}\label{inizio}
    \begin{split}
     \left(\frac{T_k(\Omega)}{k}\right)^\frac{1}{k}&\geq \frac{1}{(nW_k(\Omega))^\frac{1}{k}}\int_{-r(\Omega)}^0\mu(t)^{1+\frac{1}{k}}\, dt\\
        &=\frac{1}{(nW_k(\Omega))^\frac{1}{k}}\int_{-r(\Omega)}^{-\frac{\abs{\Omega}}{P(\Omega)}}\mu(t)^{1+\frac{1}{k}}\, dt+\frac{1}{(nW_k(\Omega))^\frac{1}{k}}\int_{-\frac{\abs{\Omega}}{P(\Omega)}}^0\mu(t)^{1+\frac{1}{k}}\, dt.
    \end{split}
    \end{equation}
    Since
    \begin{equation*}
        \mu(t)=\abs{\Omega}-\int_t^0\mu'(s)\,ds\geq \abs{\Omega}+P(\Omega)t,
    \end{equation*}
    we obtain that
    \begin{equation}\label{secondoTerm}
        \begin{split}
            \frac{1}{(nW_k(\Omega))^\frac{1}{k}}\int_{-\frac{\abs{\Omega}}{P(\Omega)}}^0\mu(t)^{1+\frac{1}{k}}\,dt&\geq\frac{1}{\left(nW_k(\Omega) \right)^\frac{1}{k}}\int_{-\frac{\abs{\Omega}}{P(\Omega)}}^0(\abs{\Omega}+P(\Omega)t)^{1+\frac{1}{k}}\,dt\\
            &=\frac{1}{\left(nW_k(\Omega) \right)^\frac{1}{k}P(\Omega)}\int_{-\frac{\abs{\Omega}}{P(\Omega)}}^0(\abs{\Omega}+P(\Omega)t)^{1+\frac{1}{k}}P(\Omega)\,dt\\
            &=\frac{k}{2k+1}\frac{\abs{\Omega}^{2+\frac{1}{k}}}{\left(nW_k(\Omega) \right)^\frac{1}{k}P(\Omega)}.
        \end{split}
    \end{equation}
    On the other hand, we can bound from below the first term in \eqref{inizio} as follows
    \begin{equation}\label{primoTerm}
        \begin{split}
            \frac{1}{(nW_k(\Omega))^\frac{1}{k}}\int_{-r(\Omega)}^{-\frac{\abs{\Omega}}{P(\Omega)}}\mu(t)^{1+\frac{1}{k}}\,dt&\geq \frac{1}{P(\Omega)(nW_k(\Omega))^\frac{1}{k}} \int_{-r(\Omega)}^{-\frac{\abs{\Omega}}{P(\Omega)}}\mu(t)^{1+\frac{1}{k}}\mu'(t)\,dt\\
            &=\frac{k}{2k+1}\frac{1}{P(\Omega)(nW_k(\Omega))^\frac{1}{k}}\mu\left(-\frac{\abs{\Omega}}{P(\Omega)}\right)^{2+\frac{1}{k}}.
        \end{split}
    \end{equation}
    Then, from \eqref{inizio}-\eqref{secondoTerm}-\eqref{primoTerm}, applying Lemma \ref{LemmaAGS} and $(a+b)^k\geq a^k+ka^{k-1}b$ we find
    \begin{equation*}
        T_k(\Omega)\geq \frac{k^{k+1}}{(2k+1)^k}\frac{\abs{\Omega}^{2k+1}}{nW_k(\Omega)P(\Omega)^k}+\frac{k^{k+2}}{(2k+1)^k}\frac{\abs{\Omega}^{2k+1}}{nW_k(\Omega)P(\Omega)^k}\left(\frac{\beta(\Omega)}{6n} \right)^\frac{2k+1}{k}.
    \end{equation*}

\end{proof}

\section{P\'olya eigenvalue functional}\label{Sec4}
For the P\'olya eigenvalue functional, things get more complicated. Indeed, the so-called web-functions technique fails and just partial result can be obtained. On the other hand, in \cite{DellaPietra2012UpperBF} the authors prove an upper bound by considering as test function the distance to the boundary, obtaining
\begin{equation}
    \label{funz:dpg}\frac{\lambda_k(\Omega)\abs{\Omega}^{k+2}}{P(\Omega)^{k+1}W_{k-1}(\Omega)}\leq \frac{n(k+2)}{n-k+1}.
\end{equation}

Anyway, this functional is different from the one that we are considering $(ii)$, as we will observe in Remark \eqref{ConfrontoGDP}.

By combining the Torsion estimate \eqref{lower:torsion} with a bound for the following functional
\begin{equation}\label{Fk}
    \mathcal{G}_k(\Omega)=\frac{\lambda_k(\Omega)T_k(\Omega)}{\abs{\Omega}^{k}}
\end{equation}
obtained in Theorem \ref{PolyaFunctional},
we are able to prove a rough estimate on the functional $(ii)$. We start by proving the following bound for \eqref{Fk}.
\begin{teorema}\label{PolyaFunctional}
    Let $\Omega\subset\R^n$ be an open, bounded, and convex set of class $C^2$ and let $k=1,\dots,n$. Then, 
    \begin{equation}\label{VDB}
        \mathcal{G}_k(\Omega)\leq\binom{n}{k}.
    \end{equation}
\end{teorema}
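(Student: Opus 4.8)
The plan is to bound the product $\mathcal{G}_k(\Omega) = \lambda_k(\Omega)T_k(\Omega)/\abs{\Omega}^k$ by testing both variational characterizations in \eqref{varcar} against a single well-chosen competitor: the torsion function $u$ itself, i.e. the solution of \eqref{torssk}. The key observation is that $u \in \mathcal{C}_k$, so it is admissible in the $\min$ defining $\lambda_k(\Omega)$, and it is also (trivially) admissible in the $\max$ defining $T_k(\Omega)$, where in fact it is the maximizer. First I would record the two identities that come from $u$ solving \eqref{torssk}: since $S_k(D^2 u) = \binom{n}{k}$ in $\Omega$, we have $\int_\Omega -u\, S_k(D^2 u)\, dx = \binom{n}{k}\int_\Omega -u\, dx$, and consequently, plugging $u$ into the torsion quotient,
\begin{equation*}
    T_k(\Omega) = \frac{\left(\int_\Omega -u\, dx\right)^{k+1}}{\int_\Omega -u\, S_k(D^2 u)\, dx} = \frac{\left(\int_\Omega -u\, dx\right)^{k+1}}{\binom{n}{k}\int_\Omega -u\, dx} = \frac{\left(\int_\Omega -u\, dx\right)^{k}}{\binom{n}{k}}.
\end{equation*}

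Next I would use $u$ as a test function in the Rayleigh quotient for $\lambda_k$: by the $\min$ characterization,
\begin{equation*}
    \lambda_k(\Omega) \leq \frac{\int_\Omega -u\, S_k(D^2 u)\, dx}{\int_\Omega (-u)^{k+1}\, dx} = \frac{\binom{n}{k}\int_\Omega -u\, dx}{\int_\Omega (-u)^{k+1}\, dx}.
\end{equation*}
Multiplying this with the expression for $T_k(\Omega)$ above gives
\begin{equation*}
    \lambda_k(\Omega) T_k(\Omega) \leq \frac{\binom{n}{k}\int_\Omega -u\, dx}{\int_\Omega (-u)^{k+1}\, dx}\cdot \frac{\left(\int_\Omega -u\, dx\right)^{k}}{\binom{n}{k}} = \frac{\left(\int_\Omega -u\, dx\right)^{k+1}}{\int_\Omega (-u)^{k+1}\, dx}.
\end{equation*}
So it remains to show $\left(\int_\Omega -u\, dx\right)^{k+1} \leq \abs{\Omega}^k \int_\Omega (-u)^{k+1}\, dx$, which is exactly Jensen's inequality (equivalently Hölder/the power-mean inequality) applied to the convex function $s \mapsto s^{k+1}$ on $[0,\infty)$ with the normalized measure $dx/\abs{\Omega}$: $\left(\frac{1}{\abs{\Omega}}\int_\Omega (-u)\, dx\right)^{k+1} \leq \frac{1}{\abs{\Omega}}\int_\Omega (-u)^{k+1}\, dx$. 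Rearranging yields $\lambda_k(\Omega)T_k(\Omega) \leq \abs{\Omega}^k \binom{n}{k}^{0}$... more precisely $\lambda_k(\Omega)T_k(\Omega)/\abs{\Omega}^k \leq \binom{n}{k}$ only after one checks the $\binom{n}{k}$ factors cancel correctly — so I would be careful to track that the final bound is $\mathcal{G}_k(\Omega) \le \binom{n}{k}$ as claimed, the cancellation of $\binom{n}{k}$ being automatic in the product step.

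I do not expect a serious obstacle here: the argument is a clean "plug the torsion function into both Rayleigh quotients" computation, and the only inequality invoked beyond the solved PDE is Jensen. The one point requiring a line of justification is that $u$, the solution of \eqref{torssk}, genuinely belongs to $\mathcal{C}_k$ (so that it is an admissible test function for $\lambda_k$) and has the expected regularity/sign ($u < 0$ in $\Omega$, $u = 0$ on $\partial\Omega$); this is standard for the $k$-Hessian torsion problem on convex $C^2$ domains and I would cite it rather than reprove it. It is also worth remarking — as the paper surely will — that equality analysis would go through the equality case of Jensen, forcing $u$ constant, which is impossible, so \eqref{VDB} is never attained; but that is a remark rather than part of the proof of the stated inequality.
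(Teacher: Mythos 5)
Your proposal is correct and is essentially the paper's own proof: both arguments test the eigenvalue Rayleigh quotient with the $k$-torsion function $u_\Omega$ (admissible since it solves \eqref{torssk}), use $S_k(D^2u_\Omega)=\binom{n}{k}$ to express everything through $\int_\Omega -u_\Omega\,dx$, and conclude with the H\"older/Jensen comparison of the $L^1$ and $L^{k+1}$ norms. Your bookkeeping in fact gives the cleaner bound $\lambda_k(\Omega)T_k(\Omega)\le \abs{\Omega}^k$, i.e. $\mathcal{G}_k(\Omega)\le 1$, since $T_k(\Omega)=\bigl(\int_\Omega -u_\Omega\,dx\bigr)^k/\binom{n}{k}$ makes the binomial factors cancel completely, and this of course implies \eqref{VDB} as stated.
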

\begin{proof}
    Let $u_\Omega$ be the solution to the Torsion problem \eqref{torssk}. Applying H\"older inequality, we have
    \begin{equation*}
        \|u_\Omega\|_{L^1(\Omega)}\leq \|u_\Omega\|_{L^{k+1}(\Omega)}\abs{\Omega}^{1-\frac{1}{k+1}},
    \end{equation*}
    that is
    \begin{equation*}
        \frac{1}{\|u_\Omega\|_{L^{k+1}(\Omega)}^{k+1}}\leq \frac{\abs{\Omega}^k}{\|u_\Omega\|_{L^1(\Omega)}^{k+1}}.
    \end{equation*}
    
    Then, testing the definition of eigenvalue in \eqref{varcar} with $u_\Omega$ and applying the previous inequality, we have
    \begin{equation*}
        \lambda_k(\Omega)\leq\frac{\int_\Omega -u_\Omega S_k(D^2 u_\Omega)\,dx}{\int_\Omega (-u_\Omega)^{k+1}\,dx}=\binom{n}{k}\frac{\|u_\Omega\|_{L^1(\Omega)}}{\|u_\Omega\|_{L^{k+1}(\Omega)}^{k+1}}\leq\binom{n}{k} \frac{\abs{\Omega}^k}{\|u_\Omega\|_{L^1(\Omega)}^k}=\binom{n}{k}\frac{\abs{\Omega}^k}{T_k(\Omega)},
    \end{equation*}
    and then
    \begin{equation*}
        \mathcal{G}_k(\Omega)\leq \binom{n}{k}.
    \end{equation*}
\end{proof}
\begin{oss}
    For $k=1$, Theorem \ref{PolyaFunctional} reduces to the estimate proved in \cite{MR43486} in the class of sets with finite measure. Subsequently, it was shown in \cite{MR3578040} that this estimate is sharp via a homogenization procedure. In the class of convex sets, the conjecture is 
    \begin{equation*}
        \frac{\lambda_1(\Omega)T_1(\Omega)}{\abs{\Omega}}\leq\frac{\pi^2}{12},
    \end{equation*}
    see \cite{MR3467375, MR3578040} for more details.
\end{oss}

\begin{proof}[Proof of Theorem \ref{TeoAut}]
    Combining \eqref{lower:torsion} and \eqref{VDB}, the proof follows.
\end{proof}

\begin{oss}\label{ConfrontoGDP}
    We can compare the functional $(ii)$ with the functional that appears in \eqref{funz:dpg} thanks to

    \begin{equation}\label{comparison Wk}
        \frac{W_{k-1}(\Omega)}{W_k(\Omega)}\geq \frac{\abs{\Omega}}{W_1(\Omega)}.
    \end{equation}
    This inequality is a consequence of the following useful formula (see \cite[Volume A, Chapter 1.2, Section 6]{HCG})
    \begin{equation*}
        W_{i+1}(\Omega)W_{i-1}(\Omega)\leq W_i^2(\Omega), \quad i=1,\dots,n-1,
    \end{equation*}
    as one can proceed in the following way 
    \begin{equation*}
        \begin{split}
            &W_1^2(\Omega)\geq \abs{\Omega}W_2(\Omega),\\
            &W_1^2(\Omega)W_2(\Omega)\geq \abs{\Omega}W_2^2(\Omega)\geq \abs{\Omega}W_1(\Omega)W_3(\Omega),\\
            &W_1^2(\Omega)W_2(\Omega)W_3(\Omega)\geq \abs{\Omega}W_1(\Omega)W_3^2(\Omega)\geq \abs{\Omega}W_1(\Omega)W_2(\Omega)W_4(\Omega).
        \end{split}
    \end{equation*}
    and, iterating this formula, we find
    \begin{equation*}
        W_1^2(\Omega)W_2(\Omega)\dots W_{k-1}(\Omega)\geq \abs{\Omega}\dots W_{k-3}(\Omega)W_{k-1}^2(\Omega)\geq \abs{\Omega}\dots W_{k-2}(\Omega)W_k(\Omega)
    \end{equation*}
    and then
    \begin{equation*}
        W_1(\Omega)W_{k-1}(\Omega)\geq \abs{\Omega}W_k(\Omega),
    \end{equation*}
    that is \eqref{comparison Wk}.
    Finally, thanks to \eqref{comparison Wk}, we find that
    \begin{equation*}
        \frac{\lambda_k(\Omega)\abs{\Omega}^{k+2}}{P(\Omega)^{k+1}W_{k-1}(\Omega)}\leq \frac{\lambda_k(\Omega)\abs{\Omega}^{k+1}}{nP(\Omega)^kW_k(\Omega)}.
    \end{equation*}
    Hence, Theorem \ref{TeoAut} gives an upper bound also for the functional in \eqref{funz:dpg}.
\end{oss}
\begin{oss}
    For $k=1$, both functionals reduce to \eqref{polyaeigenvalue}. 
\end{oss}
We remark that the Example \ref{esempio} also shows that the P\'olya eigenvalue functional is not, in general, strictly positive. Indeed, since as $\varepsilon\rightarrow0^+$ we are transforming the ellipse by an affine transformation volume preserving, the eigenvalue remains unchanged while the perimeter at the denominator is diverging.
\bibliographystyle{plain}
\bibliography{biblio}

\Addresses
\end{document}